\newtheorem{theorem}{Theorem}
\newtheorem{proposition}{Proposition}
\newtheorem{lemma}{Lemma}
\newtheorem{conjecture}{Conjecture}
\theoremstyle{definition}
\newtheorem{definition}[theorem]{Definition}
\newcommand{\Vol}{\text{Vol}}
\newcommand{\Iso}{\text{Iso}}
\newcommand{\Isop}{\text{Isop}}
\newcommand{\N}{\mathbb{N}}
\newcommand{\R}{\mathbb{R}}
\newcommand{\conbod}{\mathcal{K}^n}
\newcommand{\Sph}{\mathbb{S}^{n-1}}
\begin{document}

\title[Bezout for mixed volumes]{A New Excluding Condition Towards the Soprunov-Zvavitch Conjecture on Bezout-type inequalities}\footnote{Keywords: Bezout inequalities, isoperimetric, mixed volumes, simplex.

MSC Classification 2020: Primary 52A20; Secondary 52B60
}
\author{Maud Szusterman}
\address[Maud Szusterman]{Department of Mathematics\\ Universite de Paris\\ France}
\thanks{The author was supported by the National Science Foundation under Grant DMS-1929284 while the author was in residence at the Institute for Computational and Experimental Research in Mathematics in Providence, RI, during the Harmonic Analysis and Convexity program.}

\email{maud.szusterman@imj-prg.fr}

\maketitle

\begin{abstract}In 2015, I. Soprunov and A. Zvavitch have shown how to use the Bernstein-Khovanskii-Kushnirenko theorem to derive non-negativity of a certain bilinear form $F_{\Delta}$, defined on (pairs of) convex bodies. Together with C. Saroglou, they proved non-negativity of $F_K$ characterizes simplices, among all polytopes. It is conjectured the characterization further holds among all convex bodies. Towards this conjecture, several necessary conditions on $K$ (for non-negativity of $F_K$), were derived. We give a new necessary condition, expressed with isoperimetric ratios, which provides a further step towards a (conjectural) characterization of simplices among a certain subclass of convex bodies.
\end{abstract}

\section{Introduction}
\label{sec:intro}

The isoperimetric ratio of a convex body $K \subset \R^n$, is usually defined as $\frac{|\partial K|_{n-1}^n}{|K|_n^{n-1}}$, or as a power of this ratio. Here $|K|_n$ denotes the Lebesgue measure of $K$ (its volume), and $|\partial K|_{n-1}$ its surface area (the $(n-1)$-Hausdorff measure of its topological boundary). A classical result in convex geometry, the isoperimetric inequality, states that Euclidean balls minimize this ratio, and are the only minimizers. On the other side, the isoperimetric ratio can be arbitrarily large. However, note this ratio is not affine invariant. Following K. Ball (see \cite[Theorem 2]{Ba}), consider instead the affine invariant quantity:
$$Iso(K)=\min_{T\in O(n)} \frac{|\partial (TK)|_{n-1}^n}{|TK|_n^{n-1}},$$
where the minimum runs over the orthogonal group $O(n)$. Then $Iso(K)$ is upper bounded. In fact, denoting by $\Delta_n$ an $n$-simplex, Ball has proved :
$$Iso(K)\leq Iso(\Delta_n),$$
yielding a reverse isoperimetric inequality. Additionally, it was proved in \cite{Ba} that $n$-simplices are the only maximizers of $Iso(K)$.

Denote $V_n(L_1, ... , L_n)$ the mixed volume of $n$ convex bodies, and denote $V_n(K)=V(K[n])$ the volume of $K$. Since $|\partial K|_{n-1}=nV_n(K[n-1],B_2^n)$, where $B_2^n$ denotes the (unit) $l_2$-ball in $\R^n$, Ball's result can be reformulated within the language of mixed volumes as follows : for any convex body $K\subset \R^n$, there exists an ellipsoid $\mathcal{E}$, such that
$$V_n(K[n-1],\mathcal{E})^{n} V_n(\Delta_n)^{n-1} \leq  V_n(\Delta_n[n-1], \mathcal{E})^{n-1} V_n(K)^{n-1}.$$

Another interesting inequality of mixed volumes involving $\Delta_n$ was derived by I. Soprunov and A. Zvavitch (see \cite{SZ}), using the Bernstein-Kushnirenko-Khovanskii theorem (see \cite{Be}, \cite{Kho}, \cite{Kush}), and an inequality from real algebraic geometry called Bezout inequality:
$$\forall L_1, L_2 \in \mathcal{K}^n :\hspace{3mm} V_n(L_1,L_2, \Delta_n[n-2])V_n(\Delta_n) \leq V_n(L_1, \Delta_n[n-1])V_n(L_2, \Delta_n[n-1]).$$
This set of inequalities (called Bezout inequalities in \cite{SZ}) can be thought of as non-negativity of the bilinear form $F_{\Delta}$, defined as $$F_{\Delta}(A,B)=V_n(A, \Delta_n[n-1])V_n(B, \Delta_n[n-1])-
 V_n(A,B, \Delta_n[n-2])V_n(\Delta_n).$$
 
 A similar bilinear form $F_K$ can be defined for an arbitrary convex body $K$. Since $V_n(TL_1, ...,TL_n)=|det(T)|V_n(L_1, ... , L_n)$ (for any $n$-tuple of convex bodies $(L_i)$, and any affine transform $T$), non-negativity of $F_K$ is an affine invariant property.
 In $2018$, Saroglou, Soprunov and Zvavitch obtained a characterization of $n$-simplices among polytopes :
 \begin{theorem}
 \label{theorem:sszpolytopes}
 Let $P \subset \R^n$ be an $n$-polytope. Assume $F_P \geq 0$. Then $P$ is an $n$-simplex.
 \end{theorem}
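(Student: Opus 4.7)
The plan is to argue by contrapositive: assume $P$ is an $n$-polytope which is not a simplex, and exhibit $A, B \in \mathcal{K}^n$ with $F_P(A,B) < 0$. Since any full-dimensional $n$-polytope with exactly $n+1$ facets is a simplex, the assumption is that $P$ has $m \geq n+2$ facets; label them $F_1, \ldots, F_m$ with outer unit normals $u_1, \ldots, u_m$ and $(n-1)$-volumes $a_i$. The surface area measure of $P$ is the discrete measure $\mu_P = \sum_i a_i \delta_{u_i}$, so $V_n(A, P[n-1]) = \tfrac{1}{n}\sum_i a_i h_A(u_i)$ depends only on the support values of $A$ at the facet normals. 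Similarly, the mixed surface area measure $S(P[n-2], A;\cdot)$ is supported on $\{u_1, \ldots, u_m\}$ and its weights involve mixed $(n-1)$-volumes of facets of $P$ and the projection $A|u_i^\perp$. Setting $x = (h_A(u_i))_i, y = (h_B(u_j))_j \in \R^m$, the inequality $F_P(A, B) \geq 0$ can be rewritten in the form $(a^\top x)(a^\top y) \geq V_n(P) \cdot x^\top M_P\, y$, where $a = (a_i)$ and $M_P$ is a symmetric $m \times m$ matrix built from pairwise facet data of $P$.

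Second, I would exploit the extra facets. With $m \geq n+2$, the normals $u_1, \ldots, u_m$ in $\R^n$ satisfy at least one linear relation \emph{beyond} the canonical Minkowski relation $\sum_i a_i u_i = 0$. I would use such a non-Minkowski dependence to design test bodies whose support-value vectors $x, y$ are approximately orthogonal to $a$, so that the left-hand side $(a^\top x)(a^\top y)$ is small, while $x^\top M_P y$ receives a positive contribution from the $(n-2)$-faces bridging the facets involved in the extra dependence. Natural concrete candidates for $A$ and $B$ are thin simplices or segments aligned with chosen facet directions, slightly deformed to realize this dependence; after appropriate rescaling, the hope is to obtain $F_P(A, B) < 0$.

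The main obstacle is to turn this qualitative picture into a rigorous \emph{strict} violation rather than an indefinite-sign expression. A natural route is a Hodge-index style analysis of $M_P$ on the hyperplane $\{x \in \R^m : a^\top x = 0\}$: one would expect that for a simplex the form defined by $M_P$ is (weakly) controlled by $a a^\top$ on this hyperplane, matching the Soprunov--Zvavitch Bezout inequality, while each additional facet forces a new positive eigendirection which can be paired between $x$ and $y$ to produce negativity in $F_P$. A secondary technical difficulty is to verify that the resulting support-value vectors are actually realized by genuine convex bodies in $\mathcal{K}^n$, or otherwise to approximate by genuine bodies while preserving the strict inequality. I expect this combined spectral-and-realizability step to absorb the bulk of the work.
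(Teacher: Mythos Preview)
First, note that the paper does not contain its own proof of this theorem: Theorem~\ref{theorem:sszpolytopes} is quoted from \cite{SSZ1,SSZ2} and used as background for the paper's new excluding condition. So there is no ``paper's own proof'' to compare against here; the relevant comparison is with the arguments in \cite{SSZ1,SSZ2}.

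Regarding your proposal itself: what you have written is a plan, not a proof, and you explicitly flag the two places where the argument is missing. Both are genuine gaps. The first is the ``Hodge-index style analysis of $M_P$ on $\{a^\top x=0\}$'': you have not defined $M_P$ precisely, and the heuristic that ``each additional facet forces a new positive eigendirection'' is exactly the content of the theorem, not a tool you have in hand. The second gap is realizability: an arbitrary vector $x\in\R^m$ with $a^\top x$ small is not the tuple of support values of a convex body at $u_1,\dots,u_m$, and your remedy (``slightly deformed segments/simplices'') is not specified. Without these two steps you have only rewritten $F_P\ge 0$ as a bilinear inequality on $\R^m$ and noted that $m\ge n+2$ gives an extra linear dependence among the $u_i$; that observation alone does not yield $F_P(A,B)<0$.

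For orientation, the actual proofs in \cite{SSZ1,SSZ2} take a rather different route. They do not attempt a global spectral analysis of the bilinear form on $\R^m$. Instead they show that any polytope which is not a simplex is \emph{weakly decomposable} (a combinatorial/geometric notion introduced there), and then construct, for any weakly decomposable body, an explicit pair $(A,B)$---typically obtained via Wulff-shape perturbations of $K$ in the direction of a suitable continuous function $f$ on the sphere---for which $F_K(A,B)<0$. The Wulff-shape machinery (the same Aleksandrov variational lemma and Lemma~\ref{lemma:pointwiseCV} recalled in this paper) is what handles the realizability issue you identified: it produces genuine convex bodies whose support functions agree to first order with a prescribed perturbation, and that first-order control is enough to force the sign of $F_K$. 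If you want to salvage your approach, that is the natural tool to import; the bare linear-algebra picture on $\R^m$ is unlikely to close on its own.
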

 
 (here $F_P\geq 0$ means $F_P(A,B)\geq 0$ for all $(A,B)\in (\mathcal{K}^n)^2$, where $\conbod$ denotes the set of all convex bodies in $\R^n$).
 
 It was conjectured that this characterization further holds among all ($n$-dimensional) convex bodies, see \cite[Conj 1.2]{SZ}, or \cite[Conj 5.1]{SSZ2}.
 
 \begin{conjecture}
 \label{mainconj}
  Let $K \subset \R^n$ be a convex body. Assume $F_K \geq 0$. Then $K$ is an $n$-simplex.
 \end{conjecture}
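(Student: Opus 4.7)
The strategy I would pursue is to combine the polytope characterization of Theorem~\ref{theorem:sszpolytopes} with an approximation argument, rewriting everything in terms of mixed area measures. Using the representations
$$V_n(A,K[n-1])=\frac{1}{n}\int_{\Sph} h_A\,dS_{K[n-1]}, \qquad V_n(A,B,K[n-2])=\frac{1}{n}\int_{\Sph} h_A\,dS_{B,K[n-2]},$$
the condition $F_K(A,B)\geq 0$ becomes a two-parameter family of integral inequalities on $\Sph$, tested against support functions $h_A$ and mixed area measures $S_{B,K[n-2]}$. The guiding intuition is that this family should force the surface area measure $S_{K[n-1]}$ to be concentrated on exactly $n+1$ directions in linearly general position, which by Minkowski's existence theorem characterizes simplices.

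Concretely, I would approximate $K$ in the Hausdorff metric by a sequence of polytopes $P_m$ whose surface area measures converge weakly to that of $K$. Continuity of mixed volumes then gives $F_{P_m}(A,B)\to F_K(A,B)$ for every fixed pair $(A,B)$. The core intermediate step is a quantitative form of Theorem~\ref{theorem:sszpolytopes}: for every $n$-polytope $P$ that is at affine Hausdorff distance at least $\delta$ from the set of simplices, one should exhibit a pair $(A,B)$ with
$$F_P(A,B)\leq -c(\delta)\,V_n(A)^{1/n}V_n(B)^{1/n}, \qquad c(\delta)>0.$$
Applied to each $P_m$ and combined with $F_K(A,B)\geq 0$ in the limit, this would force $P_m$ to converge to a simplex, hence $K$ itself to be a simplex.

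The main obstacle is the stability step. In the proof of Theorem~\ref{theorem:sszpolytopes}, the witnessing pair $(A,B)$ is built from the combinatorial structure of $P$ --- typically segments or lower-dimensional faces aligned with specific facet normals --- and such test bodies degenerate sharply under Hausdorff approximation of smooth bodies, because the approximating facets proliferate and their surface areas shrink. Controlling $|F_P(A,B)|$ away from zero in this regime requires new ideas adapted to a continuous distribution of normal directions, and in particular a way to choose $(A,B)$ that depends measurably rather than combinatorially on $K$. The isoperimetric excluding condition established in the present paper provides one such quantitative obstruction valid in the smooth regime; a natural route forward is to enrich the family of necessary conditions ruling out non-simplicial $K$ until their union covers the entire complement of the affine orbit of $\Delta_n$ in $\conbod$.
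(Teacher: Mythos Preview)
The statement you are addressing is the paper's open \emph{conjecture}, not a theorem: the paper does not prove Conjecture~\ref{mainconj}, and so there is no proof to compare against. The paper's contribution is the excluding condition of Theorem~\ref{sevilla:main}, which rules out some non-simplicial $K$ but by no means all of them.

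Your proposal is an outline with a self-identified gap, not a proof. The missing ingredient is the quantitative stability estimate
\[
F_P(A,B)\leq -c(\delta)\,V_n(A)^{1/n}V_n(B)^{1/n}
\]
for every polytope $P$ at affine distance at least $\delta$ from the simplex class. No such result is known, and your own final paragraph correctly diagnoses why: in the proof of Theorem~\ref{theorem:sszpolytopes} the witnessing pair $(A,B)$ is typically built from segments or lower-dimensional faces aligned with facet normals of $P$. Such test bodies have $V_n(A)=V_n(B)=0$, so your chosen normalization is vacuous precisely on the witnesses that are actually used. Even with a different normalization, the limit step fails: you only have $F_{P_m}(A,B)\to F_K(A,B)$ for \emph{fixed} $(A,B)$, whereas the stability bound hands you a pair $(A_m,B_m)$ that depends on $m$ and becomes increasingly singular as the facets of $P_m$ proliferate and shrink, so no compactness argument recovers a limiting pair with $F_K(A,B)<0$.

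In effect the plan reduces Conjecture~\ref{mainconj} to a stability version of Theorem~\ref{theorem:sszpolytopes} that is at least as hard as the conjecture itself; this is a reformulation rather than a proof.
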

 
 Several necessary conditions (for $F_K\geq0$ to hold) were derived : $K$ cannot be decomposable, nor weakly decomposable (see \cite[Def 5.3]{SSZ2} where this notion was introduced), the surface area measure $S_K$ cannot have a regular direction\footnote{a unit vector $u$ such that $K^u$ is $0$-dimensional, where $K^u=\{y\in K : \langle y,u \rangle=\max_{z\in K}\langle z,u \rangle \}$} in its support (see \cite[Prop 4.2]{SSZ2}), $K$ cannot have infinitely many facets on its boundary. We refer to \cite{SZ}, \cite{SSZ1}, \cite{SSZ2} for proofs, and comments on these conditions.
 
 In this work, we provide a new necessary condition, namely:
 \begin{theorem}
 \label{sevilla:main}
Let $K$ be a convex body such that $K$ has a facet $F$ satisfying :
$Isop(F)>Isop(K)$. Then $F_K\geq 0$ doesn't hold : there exists a pair of convex bodies $(A,B)\in (\mathcal{K}^n)^2$ such that $F_K(A,B)<0$.
 \end{theorem}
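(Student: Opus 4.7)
My plan is to exhibit a pair $(A,B) \in (\conbod)^2$ with $F_K(A,B)<0$ chosen from a small family of natural test bodies built from the facet $F$ and its outer normal $u$. WLOG translate $K$ so that $F \subset u^\perp$, and let $\pi$ denote the orthogonal projection onto $u^\perp$, $S_u = [0,u]$ the unit segment in direction $u$, and $B'$ the unit Euclidean $(n{-}1)$-ball inside $u^\perp$; note that $S_K$ has an atom of mass $|F|_{n-1}$ at $u$ and that $h_F(\pm u) = h_{B'}(\pm u) = 0$. The first step is a reduction: taking $A=S_u$ (extended by continuity from a thin $n$-dimensional approximation $S_u+\varepsilon B_2^n$), Kubota's projection formula yields
\[
V_n(S_u,K[n-1]) = \tfrac{1}{n}|\pi K|_{n-1}, \qquad V_n(S_u,B,K[n-2]) = \tfrac{1}{n}\,V_{n-1}(\pi B,(\pi K)[n-2]),
\]
whence
\[
F_K(S_u,B) < 0 \ \Longleftrightarrow \ \frac{V_n(B,K[n-1])}{V_n(K)} < \frac{V_{n-1}(\pi B,(\pi K)[n-2])}{|\pi K|_{n-1}},
\]
recasting the Bezout violation as a comparison between an $n$-dimensional and an $(n{-}1)$-dimensional normalized mixed volume.

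I would then specialize $B$ to a facet-related body in the linear span of $\{F, B'\}$. For $B=F$ (so $\pi B = F$), the right-hand side is bounded below by $(|F|_{n-1}/|\pi K|_{n-1})^{1/(n-1)}$ via Minkowski's first inequality in $u^\perp$; for $B=B'$ (so $\pi B = B'$), it equals the isoperimetric quotient $|\partial \pi K|_{n-2}/((n{-}1)|\pi K|_{n-1})$ of $\pi K$ in $u^\perp$. Taking $B = \lambda F + \mu B'$ with parameters $\lambda,\mu>0$ to be optimized, and using the representation $V_n(L,K[n-1]) = \tfrac{1}{n}\int h_L\, dS_K$ on the left-hand side (where the facet atom contributes nothing, since $h_F(u)=h_{B'}(u)=0$), the inequality becomes an explicit relation among the invariants $|K|, |\partial K|_{n-1}, |F|_{n-1}, |\partial F|_{n-2}, |\pi K|_{n-1}, |\partial \pi K|_{n-2}$, which I would compare to the hypothesis $\Isop(F) > \Isop(K)$ using the identity $|\partial K|_{n-1} = nV_n(K[n-1], B_2^n)$ together with Cauchy's surface area formula (which in particular yields $2|\pi K|_{n-1} \leq |\partial K|_{n-1}$).

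The main obstacle is precisely the appearance of the "extraneous" invariants $|\pi K|_{n-1}$ and $|\partial \pi K|_{n-2}$, which do not occur in the hypothesis. Eliminating or controlling them—plausibly by enriching the test family on the left with $A = \alpha B_2^n + \beta S_u$, so as to bring in $|\partial K|_{n-1}$ directly via $V_n(B_2^n, K[n-1]) = |\partial K|_{n-1}/n$, and then tuning the four parameters $(\alpha,\beta,\lambda,\mu)$ so that the $\pi K$-terms cancel between the two sides—is where I expect the main technical energy of the proof to be spent, and is exactly the step through which the iso-ratio condition $\Isop(F) > \Isop(K)$ on the four invariants $|K|, |\partial K|_{n-1}, |F|_{n-1}, |\partial F|_{n-2}$ should emerge cleanly.
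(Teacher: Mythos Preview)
Your reduction via $A=S_u$ is correct and the displayed equivalence is right, but the remainder of the plan has a real gap: the projection $\pi K$ enters and you never show it can be removed. The hypothesis $\Isop(F)>\Isop(K)$ constrains only $|K|_n$, $|\partial K|_{n-1}$, $|F|_{n-1}$, $|\partial F|_{n-2}$, while your test family $A=\alpha B_2^n+\beta S_u$, $B=\lambda F+\mu B'$ produces in addition $|\pi K|_{n-1}$, $|\partial(\pi K)|_{n-2}$, $\int_{\Sph}|\pi_u v|\,dS_K(v)$, and the second-order quantity $V_n(B_2^n,B,K[n-2])$. These are genuinely new invariants of $K$; there is no linear relation tying them to the four hypothesis invariants, so there is no reason a choice of $(\alpha,\beta,\lambda,\mu)$ should make them cancel. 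In particular $|\pi K|_{n-1}\geq |F|_{n-1}$ with equality only in degenerate cases, and for bodies where the shadow is much larger than the facet your segment test cannot see $F$ at all.

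The paper avoids this obstacle by replacing the segment $S_u$ with a \emph{perturbation of $K$ itself}: it takes $A=L_t=W(h_K+tf)$, a Wulff shape with $f$ a continuous bump concentrated at $u_0$, and $B=M$ a half-ball whose single facet has outer normal $u_0$. Since $F_K(K,M)=0$, one differentiates in $t$. Aleksandrov's variational lemma gives $V_n(L_t,K[n-1])-V_n(K)\lesssim \tfrac{t}{n}|F|_{n-1}$ (the atom of $S_K$ at $u_0$), while the atom of the mixed area measure $S(M,K[n-2],\cdot)$ at $u_0$ equals $V_{n-1}(B_2^{n-1},F[n-2])=\tfrac{1}{n-1}|\partial F|_{n-2}$, and monotonicity gives $V_n(M,K[n-1])\leq \tfrac{1}{n}|\partial K|_{n-1}$. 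These three facts feed exactly the four invariants of the hypothesis into $F_K(L_t,M)$, with no projection term anywhere. The missing idea in your sketch is precisely this perturbative choice of $A$, which isolates the facet rather than its shadow.
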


 It allows in particular to recover the necessity (for non-negativity of $F_K$) of having at most finitely many facets. We leave as an open question whether Theorem \ref{sevilla:main} allows to recover characterization amongst polytopes, i.e. Theorem \ref{theorem:sszpolytopes}, see Question 1 below.

{\bf Acknowledgements.} The author wishes to thank Artem Zvavitch for introducing her to this topic and for feedback on preliminary drafts of the current work. The author also thanks Evgueni Abakumov and Omer Friedland for many discussions and support. Finally, many thanks to Dylan Langharst for pointing to the reverse isoperimetric inequality by Ball, and for many editing suggestions which helped clarifying the presentation.

\section{Preliminaries}
\label{sec_notation}
A classical result due to Minkowski \cite{Mink} states that, for any $m\geq 2$, and any $m$-tuple $(K_1, ... , K_m)$ of compact convex sets in $\R^n$, the volume of the Minkowski sum $\sum \lambda_i K_i$, is a polynomial in the $\lambda_i \geq 0$. More precisely, Minkowski's theorem asserts there exists coefficients $c_a \geq 0$, indexed by $m$-tuples $a=(a_1, ... , a_m) \in \mathbb{N}^m$ summing to $n$ (i.e. $|a|=a_1+...+a_m=n$), which only depend on $(K_1, ... , K_m)$, and such that, for any $\lambda_1, ... , \lambda_m \geq 0$:
$$\left| \sum_{i=1}^m \lambda_i K_i\right|_n=\sum_{a\in \N^m , |a|=n} \frac{n!}{a_1 ! ... a_m!} \left( \prod_{j=1}^m \lambda_j^{a_j} \right) c_a =: \sum_{|a|=n} {n \choose a} \lambda^a c_a.$$
These coefficients are called mixed volumes, and usually one denotes $$V_n(K_1[a_1], ... , K_m[a_m]):=c_a,$$ with $a=(a_1, ... ,a_m)$, where $K_i[a_i]$ means that $K_i$ appears $a_i$ times as an argument.

It follows directly from Minkowski's theorem that, for any compact convex sets $K_1, K'_1, K_2, ... , K_n$, any $x_1\in \R^n$, any $\lambda \geq 0$, and any permutation $\sigma \in \mathcal{S}(n)$ :
\begin{itemize}
\item[i-] $V_n(K_1, ... , K_n)=\frac{1}{n!} \sum_{J \subset [n]} (-1)^{n-|J|} |K_J|_n$ where \footnote{with the convention $K_{\emptyset}=\emptyset$} $K_J=\sum_{i\in J} K_i$
\item[ii-] $V_n(K_1+x_1, K_2, ... , K_n)=V_n(K_1, ... , K_n)$  (translation invariance)
\item[iii-] $V_n(K_1+\lambda K'_1, K_2, ... , K_n)=V_n(K_1, K_2, ... , K_n)+\lambda V_n(K'_1, K_2, ... , K_n)$ (multilinearity)
\item[iv-] $V_n(K_{\sigma(1)}, ... , K_{\sigma(n)})=V_n(K_1, ... , K_n)$ (symmetry in the arguments)
\item[v-] $V_n(.)$ is continuous on $(\conbod)^n$, with respect to Hausdorff topology.
\end{itemize}

Moreover, Minkowski proved $V_n(.)$ also enjoys the following properties :
\begin{itemize}
\item[vi-] $V_n(K_1, ... , K_n)\geq 0$ (non-negativity)
\item[vii-] if $K_1 \subset K'_1$, then $V_n(K_1, K_2, ... , K_n) \leq V_n(K'_1, K_2,  ... , K_n)$ (monotonicity).
\end{itemize}

Hence $V_n(.)$ is a (multilinear) functional on $(\conbod)^n$. When the underlying dimension (i.e. the total number of arguments $V_n(.)$ takes) is clear, we may drop the subscript $n$, and write $V(.)$ rather than $V_n(.)$. It follows directly from the definition of mixed volumes, that $V_n(K[n])=|K|_n$. Therefore we may slightly abuse notation, and write $V_n(K)$, or even $V(K)$, instead of $V_n(K[n])$, as this shortcut seems common in the literature. To avoid confusion, we only use this shortcut when $K$ is indeed $n$-dimensional, i.e. when $K$ is a non-degenerate convex body in $\R^n$.

Let $u\in \mathbb{S}^{n-1}$ be a unit vector. Denote  $\pi_u$ the orthogonal projection onto $u^{\perp}$. If $u_1, ... , u_k$ are $k$ linearly independent unit vectors in $\R^n$, denote $\pi_U$ the orthogonal projection onto $(u_1, ... , u_k)^{\perp}$.
We will also need the following well-known property of mixed volumes :
\begin{itemize}
\item[viii-] $$V_n([0,u], K_2, ... , K_n)=\frac{1}{n}V_{n-1}(\pi_u K_2, ... , \pi_u K_n) ,$$
\item[ix-] \begin{align*}V_n([0,u_1], ... , &[0,u_k], K_{k+1}, ... , K_n)=\frac{k! V_k([0,u_1], ... , [0,u_k])}{n(n-1) ... (n-k+1)} V_{n-k}\left( \pi_U K_{k+1}, ... , \pi_U K_n\right).\end{align*}
\end{itemize}
(identity (ix) is deduced from (viii) by iteration).

Let $K \subset \R^n$ be a compact convex set. Its support function $h_K$, is defined on $\R^n$ by $h_K(x)=\max_{y\in K} \langle y,x\rangle$, where $\langle . , . \rangle$ denotes the usual scalar product on $\R^n$. Since $h_K(\lambda x)=\lambda h_K(x)$ for any $x\in \R^n$, $\lambda>0$, we shall more often consider $h_K$ as a function on $\mathbb{S}^{n-1}$. Note that $h_K$ characterizes $K$, since $K=\bigcap_u H^-(u,h_K(u))$, where $H^-(u,b)=\{z\in \R^n : \langle z,u\rangle \leq b\}$.

If we fix a convex body $K\subset \R^n$, then there exists a unique non-negative measure $S_K$ on $\mathbb{S}^{n-1}$, such that the following holds for any compact convex set $L$ :
\begin{equation}
\label{eqn:integralformula}
V_n(L,K[n-1])=\frac{1}{n} \int_{\mathbb{S}^{n-1}} h_L(u) dS_K(u).
\end{equation}
For instance, when $K=P$ is a polytope, the following integral representation of $V_n(L,P[n-1])$ is known :
\begin{equation}
\label{eq_polytope}
V_n(L, P[n-1])=\frac{1}{n} \sum_{u\in E(P)} h_L(u) |P^u|_{n-1}
\end{equation}
where $E(P)=\{\text{outer normal vectors of $P$}\}$ and $P^u=P\cap H(u,h_P(u))=\{y\in P : \langle y, u\rangle =h_P(u)\}$ is the facet whose outer normal vector is $u$. This means that $S_P$ is the discrete measure $S_P=\sum_{u\in E(P)} |P^u|_{n-1} \delta_u$, where $\delta_v$ denotes the Dirac measure at $v \in \mathbb{S}^{n-1}$.


Though the formula  \ref{eqn:integralformula} could be taken as a definition\footnote{the fact that knowing $\int h_K d\mu$ for all convex bodies $K$, is sufficient to characterize $\mu$, i.e. to know $\int f d\mu$ for any continuous function $f$ on the sphere, can be easily derived for instance from Lemma \ref{lemma:variationalmixed}} of the surface area measure $S_K$, one may alternatively first define $S_P$ for polytopes, via $S_P=\sum_{u\in E(P)} |P^u|_{n-1} \delta_u$, and then define $S_K$ for an arbitrary convex body $K$, by approximation \footnote{if $(P_k)$ is a sequence of polytopes approximating $K$, then the sequence of measures $(S_{P_k})$ is tight with respect to weak topology : define $S_K$ as the weak limit of $S_{P_k}$} (see \cite[Theorem 4.1.1, Theorem 4.2.1]{Sch1}). In this case, the integral formula \ref{eqn:integralformula} holds by definition for polytopes, and is deduced (in general) from continuity of mixed volumes, and of $(L\mapsto S_L)$.


Recall that if $\Omega$ is a closed subset of $\Sph$, and $g$ is a continuous function on $\Omega$, the Wulff-shape with respect to $(\Omega,g)$ is the convex body $W(\Omega,g)=\bigcap_{u\in\Omega} \{x\in \R^n : \langle x,u\rangle \leq g(u)\}$. Let $S_K$ be the surface area measure of $K$. More specifically, if $K$ is a convex body, $\Omega$ a closed subset of $\Sph$,  if $supp(S_K)\subset \Omega$ and if $f : \Omega \to \R$ is a continuous function, then we denote $\left (W_t\right)_t=\left(W(\Omega, h_K+tf)\right)_t$ the  family of Wulff-shape perturbations of $K$ associated with $(\Omega, f)$. Note that there exists $t_0=t_0(K)<0$ such that $V_n(W_t)>0$ for all $t >t_0$.

When $\Omega=\Sph$, we denote $W(g)=W(\Sph, g)$ the corresponding Wulff-shapes. See for instance \cite[Theorem 1.1]{SSZ2} where Wulff-shape perturbations (with $\Omega=\Sph$) were used to derive a characterization of $n$-simplices as the only convex bodies $K$ such that $G_K \geq 0$, where $G_K$ is the multi-linear form on $(\mathcal{K}^n)^n$ defined by $G_K(A_1, ... ,A_n)=V_n(A_1,K[n-1])V_n(K,A_2, ... ,A_n)-V_n(A_1, ... ,A_n)V_n(K)$.

 The following theorem is known as Alexandrov's variational lemma. We refer to \cite{Al1} for a proof, see also \cite[Lemma 7.4.3]{Sch1}. 
\begin{theorem}
Assume $K$ is a convex body, $supp(S_K)\subset \Omega$, and $f\in \mathcal{C}(\Omega, \R)$. For $t\in\R$, denote $W_t=W(\Omega, h_K+tf)$. Then $(t\mapsto V_n(W_t))$ is differentiable at $0$, and
\begin{equation}
 \label{eq_Aleksandrov_vol}
\diff{V_n(W_t)}{t}\bigg|_{t=0}=\lim_{t\to 0}\frac{V_n(W_t)-V_n(K)}{t}=\int_{\mathbb{S}^{n-1}}f(u)dS_K(u),
  \end{equation}
\end{theorem}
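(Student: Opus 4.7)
The plan is to sandwich $V_n(W_t)$ between two explicit expressions that are linear in $t$ to first order, via two applications of Minkowski's first inequality $V_n(A,L[n-1])^n \ge V_n(A)V_n(L)^{n-1}$ combined with the integral representation \eqref{eqn:integralformula}. The two key pointwise facts driving the computation are the inequality $h_{W_t}(u) \le h_K(u) + tf(u)$ for all $u\in\Omega$ (by the very definition of the Wulff shape) and the equality $h_{W_t}(u) = h_K(u) + tf(u)$ on $\mathrm{supp}(S_{W_t})$ (the content of the structural lemma I discuss at the end).

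\textbf{Upper bound.} For $t>0$, the pointwise inequality $h_{W_t}\le h_K+tf$ on $\mathrm{supp}(S_K)\subset\Omega$ gives
\[
V_n(W_t,K[n-1]) \;=\; \tfrac{1}{n}\int h_{W_t}\,dS_K \;\le\; V_n(K) + \tfrac{t}{n}\int f\,dS_K.
\]
Combining with $V_n(W_t,K[n-1])^n \ge V_n(W_t)V_n(K)^{n-1}$, raising to the $n$-th power, dividing by $V_n(K)^{n-1}$, and Taylor expanding to first order in $t$ yields $\limsup_{t\to 0^+} t^{-1}(V_n(W_t)-V_n(K)) \le \int f\,dS_K$. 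The same chain for $t<0$ (where the inequalities flip through division by $t$) gives the matching $\liminf_{t\to 0^-}$ bound.

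\textbf{Lower bound.} Granting the structural lemma that for $|t|$ small $W_t$ is a nondegenerate convex body with $\mathrm{supp}(S_{W_t})\subset\Omega$, $h_{W_t}=h_K+tf$ on that support, and $S_{W_t}\rightharpoonup S_K$ with $V_n(W_t)\to V_n(K)$, the representation $V_n(K,W_t[n-1]) = \frac{1}{n}\int h_K\,dS_{W_t}$ combined with $h_K = h_{W_t}-tf$ on $\mathrm{supp}(S_{W_t})$ gives the exact identity
\[
V_n(K,W_t[n-1]) \;=\; V_n(W_t) - \tfrac{t}{n}\int f\,dS_{W_t}.
\]
Applying Minkowski's inequality $V_n(K,W_t[n-1])^n \ge V_n(K)V_n(W_t)^{n-1}$ and rearranging produces
\[
V_n(W_t) \;\ge\; V_n(K)\Bigl(1 - \tfrac{t\int f\,dS_{W_t}}{n V_n(W_t)}\Bigr)^{-n}.
\]
A first order expansion, using $V_n(W_t)\to V_n(K)$ and $\int f\,dS_{W_t}\to \int f\,dS_K$, then gives $\liminf_{t\to 0^+}t^{-1}(V_n(W_t)-V_n(K)) \ge \int f\,dS_K$, and symmetrically for $t<0$. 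Together with the upper bound, both one-sided derivatives at $0$ exist and equal $\int f\,dS_K$.

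\textbf{Main obstacle.} The nontrivial work is the structural lemma on $W_t$, i.e.\ that it is a nondegenerate body, that its surface measure is still supported in $\Omega$, that $h_{W_t}$ realizes the bound $h_K+tf$ on that support, and that $S_{W_t}\rightharpoonup S_K$. I would prove it by polytope approximation: choose a sequence of polytopes $P_k\to K$ (Hausdorff) whose outer normals are finite subsets of $\Omega$, available by approximating $S_K$ weakly by finitely supported measures on $\Omega$ and invoking Minkowski's existence theorem. For a polytope with outer normals in $\Omega$, $W_t(P_k)$ is itself a polytope obtained by shifting its defining hyperplanes by $tf(u)$; for $|t|$ small enough no facet collapses, \eqref{eq_polytope} applies, and the facet areas $|W_t^u|_{n-1}$ are continuous in $t$, so all four claims reduce to a direct facet-by-facet check. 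Passing to the limit $k\to\infty$ relies on Hausdorff continuity of the Wulff shape in $(\Omega,g)$ and weak continuity of $L\mapsto S_L$, both standard.
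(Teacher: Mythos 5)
Your proof is correct, and it targets precisely the volume statement \eqref{eq_Aleksandrov_vol} (which the paper itself does not prove, deferring to \cite{Al1} and Schneider); the paper instead proves the closely related mixed-volume version, Lemma~\ref{lemma:variationalmixed}, in the appendix. The two arguments share the same skeleton: the upper bound comes from $h_{W_t}\le h_K+tf$ on $\Omega\supset\mathrm{supp}(S_K)$ integrated against $S_K$, and the lower bound comes from the structural fact that $h_{W_t}=h_K+tf$ holds $S_{W_t}$-a.e., integrated against $S_{W_t}$. The difference is in how one bridges from mixed volumes to $V_n(W_t)$: you apply Minkowski's first inequality twice, once to $V_n(W_t,K[n-1])$ and once to $V_n(K,W_t[n-1])$, which is the natural route for the pure volume; the paper, because its target is $V_1(t)=V_n(W_t,K[n-1])$ rather than $V_n(W_t)$, gets the upper bound for free and then needs two Brunn--Minkowski applications (the inequalities \eqref{BMAl1} and \eqref{BMAl2}) to transfer the $S_{W_t}$-integral down to $V_1-V_0$. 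Both therefore rest on the same ``structural lemma'' you isolate: for small $|t|$ the body $W_t$ is nondegenerate, $\mathrm{supp}(S_{W_t})\subset\Omega$, $h_{W_t}$ equals $h_K+tf$ on that support, and $S_{W_t}\rightharpoonup S_K$; the paper uses the $S_{W_t}$-a.e.\ equality explicitly in the line ``where we used that $h_{W_t}(u)=h_K(u)+tf(u)$ for $S_{W_t}$-a.e.\ $u\in\Omega$.'' Your polytope-approximation programme for this lemma is workable, though it is worth knowing that all four claims are directly available in Schneider \cite{Sch1} (see Lemma~7.5.1 there, together with weak continuity of $L\mapsto S_L$ and Hausdorff continuity of the Wulff map), so one can simply cite them rather than rebuild them. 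One small thing to make explicit in the upper bound is that the quantity $V_n(K)+\tfrac{t}{n}\int f\,dS_K$ stays positive for $|t|$ small so that raising to the $n$-th power is legitimate in both signs of $t$; once that is said, the rest of your one-sided limit bookkeeping is sound.
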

Minor modifications of the proof of the above theorem, yields a similar statement in terms of (first) mixed volumes, as follows.
\begin{lemma}[Alexandrov's variational lemma for mixed volume]
\label{lemma:variationalmixed}
Assume $K$ is a convex body, $supp(S_K)\subset \Omega$, and $f\in \mathcal{C}(\Omega, \R)$. Denote $W_t=W(\Omega, h_K+tf)$, $t\in \R$. Denote $V_1(t)=V_n(W_t,K[n-1])$. Then $(t\mapsto V_1(t))$ is differentiable\footnote{on both sides} at $0$, and :
\begin{equation} 
\label{eq_Aleksandrov}
\diff{V_1(t)}{t}\bigg|_{t=0}=\lim_{t\to 0}\frac{V_1(t)-V_n(K)}{t}=\frac{1}{n}\int_{\mathbb{S}^{n-1}}f(u)dS_K(u),
 \end{equation}
\end{lemma}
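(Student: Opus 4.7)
The plan is to mirror Alexandrov's classical argument, but applied to the integral representation
\[
V_1(t) = V_n(W_t,K[n-1]) = \frac{1}{n}\int_{\mathbb{S}^{n-1}} h_{W_t}(u)\,dS_K(u)
\]
coming from \eqref{eqn:integralformula}. The key observation is that, unlike in the volume case where the surface measure $S_{W_t}$ also varies with $t$, here the measure $S_K$ is fixed and only $h_{W_t}$ depends on $t$. This will make one side of the difference-quotient estimate immediate; the matching side will then follow indirectly, by combining the volume version of Alexandrov's lemma (the preceding theorem) with Minkowski's first inequality.

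First I would extract a one-sided estimate that is essentially free. By definition of the Wulff shape, $W_t=\bigcap_{u\in\Omega}\{\langle\cdot,u\rangle \leq h_K(u)+tf(u)\}$, so $h_{W_t}(u)\leq h_K(u)+tf(u)$ for every $u\in\Omega$. Since $supp(S_K)\subset \Omega$, integrating against $S_K$ gives
\[
V_1(t) - V_n(K) \;\leq\; \frac{t}{n}\int_{\mathbb{S}^{n-1}} f\,dS_K.
\]
For $t>0$ this provides the upper bound $\frac{V_1(t)-V_n(K)}{t}\leq \frac{1}{n}\int f\,dS_K$, and for $t<0$ the matching lower bound (with no remainder term on this side).

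For the bound in the opposite direction I would invoke Minkowski's first inequality, $V_1(t)^n \geq V_n(W_t)\,V_n(K)^{n-1}$, in combination with the volume version of Alexandrov's lemma stated just above, which yields $V_n(W_t) = V_n(K) + t\int f\,dS_K + o(t)$. Setting $s=V_n(W_t)-V_n(K)=O(t)$ and expanding $(V_n(K)+s)^{1/n}$ to first order around $V_n(K)>0$, I obtain
\[
V_1(t) \;\geq\; V_n(W_t)^{1/n}V_n(K)^{(n-1)/n} \;=\; V_n(K) + \frac{t}{n}\int f\,dS_K + o(t).
\]
Dividing by $t$ gives the complementary lower bound on the difference quotient when $t>0$, and the complementary upper bound when $t<0$. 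Together with the previous paragraph this forces both one-sided limits to equal $\frac{1}{n}\int f\,dS_K$, which is the desired two-sided differentiability.

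The one technical subtlety I expect is ensuring that the Minkowski-first step is applicable on a two-sided neighborhood of $t=0$, i.e.\ that $V_n(W_t)>0$ for small $|t|$, so that $V_n(W_t)^{1/n}$ is well-defined and the Taylor expansion of $x\mapsto x^{1/n}$ around $V_n(K)>0$ is legitimate. This is precisely the content of the observation $t_0(K)<0$ recalled just before the lemma, and it follows in any case from Hausdorff-continuity of $t\mapsto W_t$ together with the non-degeneracy of $K$.
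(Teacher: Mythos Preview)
Your argument is correct, and in fact cleaner than the paper's. Both proofs obtain the ``easy'' side of the difference quotient in the same way, from $h_{W_t}\leq h_K+tf$ on $\Omega\supset\operatorname{supp}(S_K)$. The difference lies in the other side. The paper does not invoke the volume version of the lemma; instead it applies Brunn--Minkowski twice, once to compare $V_1^n$ with $V_nV_0^{n-1}$ and once (symmetrically) to compare $V_{n-1}^n$ with $V_0V_n^{n-1}$, then combines these into $V_1-V_0\geq\alpha_t(V_n-V_{n-1})$ with $\alpha_t\to 1$, and finally evaluates $(V_n-V_{n-1})/t=\frac1n\int f\,dS_{W_t}$ using the $S_{W_t}$-a.e.\ identity $h_{W_t}=h_K+tf$ together with the weak convergence $S_{W_t}\to S_K$. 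Your route bypasses $V_{n-1}$, the auxiliary ratios, and the weak-continuity step entirely: a single application of Minkowski's first inequality plus the already-stated volume version and a first-order expansion of $x\mapsto x^{1/n}$ suffices. The paper's approach is more self-contained (it does not lean on the preceding theorem), whereas yours exploits that theorem to get a shorter and more transparent argument; your remark about $t_0(K)<0$ ensuring $V_n(W_t)>0$ near $0$ correctly addresses the only place where care is needed.
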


Fix $K, \Omega$ and $f$ (as above), and let $t_0=\sup \{t<0 : |W_t|_n=0\} <0$. Denote $(W_t)_t$ the associated family of Wulff-shape perturbations. One can easily check that for any $u\in \Sph$, the map $(t\mapsto h_{W_t}(u))$ is concave on $]t_0,+\infty[$. In particular, this map is both left and right-differentiable at $t=0$. In fact, Lemma \ref{lemma:variationalmixed} allows to draw a more precise conclusion here.
\begin{lemma}
\label{lemma:pointwiseCV}
Let $(W_t)_t$ be Wulff-shape perturbations of a given convex body $K$, with respect to $(\Omega, f)$. Then for $S_K$-almost every $u\in \Sph$:
\begin{equation} \label{pointwiseCV}
\diff{h_{W_t}(u)}{t}\bigg|_{t=0}=\lim_{t\to 0}\frac{h_{W_t}(u)-h_K(u)}{t}=f(u).
\end{equation}
\end{lemma}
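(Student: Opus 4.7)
My plan is, for each fixed $u \in \Sph$, to view $\phi_u(t) := h_{W_t}(u)$ as a concave function of $t$ on $(t_0,+\infty)$, bound its one-sided derivatives at $0$ by $f(u)$ on one side via an elementary half-space argument, and then upgrade these one-sided bounds to equalities at $S_K$-a.e.\ $u$ by integrating against $S_K$ and invoking Lemma~\ref{lemma:variationalmixed}. Concavity of $\phi_u$ on $(t_0,+\infty)$ is the property recorded just before the lemma statement; it follows from the inclusion $\lambda W_s + (1-\lambda) W_t \subset W_{\lambda s + (1-\lambda)t}$, which itself is immediate from the Wulff-shape definition and the linearity in $t$ of the defining constraints. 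In particular $\phi_u'(0^+)$ and $\phi_u'(0^-)$ exist and are finite for every $u$.

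The pointwise sandwich is the key input. For every $u \in \Omega$, $W_t \subset H^-(u,\,h_K(u)+tf(u))$ yields $\phi_u(t) \leq h_K(u)+tf(u)$; the same inequality at $t=0$ combined with $W_0 \supset K$ shows $h_{W_0}(u) = h_K(u)$ on $\Omega$, so that in particular $V_1(0) = V_n(K)$ (as implicitly used in Lemma~\ref{lemma:variationalmixed}). Writing $g_t(u) := (\phi_u(t)-h_K(u))/t$, this bound rearranges to $g_t(u) \leq f(u)$ for $t>0$ and $g_t(u) \geq f(u)$ for $t<0$. Since $\phi_u$ is concave, $t \mapsto g_t(u)$ is non-increasing on $(t_0,+\infty)\setminus\{0\}$, so $g_t(u) \uparrow \phi_u'(0^+) \leq f(u)$ as $t\downarrow 0$, and $g_t(u) \downarrow \phi_u'(0^-) \geq f(u)$ as $t\uparrow 0$.

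To close the gap between these one-sided bounds and $f(u)$, I apply (\ref{eqn:integralformula}) with $L = W_t$ together with Lemma~\ref{lemma:variationalmixed} to get $\int_{\Sph} g_t\,dS_K \to \int_{\Sph} f\,dS_K$ as $t\to 0$. For $t\downarrow 0$ the non-negative functions $f-g_t$ decrease monotonically to $f-\phi_u'(0^+)$ on $supp(S_K)\subset\Omega$, and are uniformly bounded there for $t$ in any small right-neighborhood of $0$ (because $h_K$, $h_{W_t}$, $f$ are all bounded on $\Sph$ and $W_t$ is non-degenerate for $t > t_0$). Since $S_K$ is a finite measure, monotone convergence yields $\int (f-\phi_u'(0^+))\,dS_K = 0$, and non-negativity of the integrand then forces $\phi_u'(0^+) = f(u)$ for $S_K$-a.e. $u$. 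A symmetric argument for $t\uparrow 0$ handles the left derivative, and equality of the two one-sided derivatives at $S_K$-a.e.\ $u$ is precisely the conclusion of the lemma. The only non-routine step is justifying the limit-integral interchange, which is painless here because concavity makes the slopes monotone in $t$ and $S_K$ is finite with bounded support functions appearing in the dominating function.
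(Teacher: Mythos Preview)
Your proof is correct and follows essentially the same route as the paper's: establish concavity of $t\mapsto h_{W_t}(u)$, deduce the pointwise sandwich $\phi_u'(0^+)\le f(u)\le \phi_u'(0^-)$ on $\Omega$, and then close the gap by integrating against $S_K$ and invoking Lemma~\ref{lemma:variationalmixed}. The only difference is in the limit--integral interchange: the paper uses Fatou's lemma on the right side (via level sets $U_\epsilon=\{l_d<f-\epsilon\}$) and dominated convergence on the left side with an explicit dominating function $C\,h_{K-x_0}$, whereas you exploit directly the monotonicity in $t$ of the difference quotients $g_t(u)$ (a consequence of concavity) together with boundedness on a finite measure. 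Your version is slightly cleaner and more symmetric between the two sides; the paper's version is a bit more hands-on but equivalent in substance.
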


We leave a proof of this pointwise convergence lemma in appendix, see also \cite[Theorem 4.1]{SSZ2} where the statement was derived from Alexandrov's variational lemma (Theorem \ref{eq_Aleksandrov_vol}). We will need Lemma \ref{lemma:pointwiseCV} below, for the proof of Proposition \ref{proposition:main}.


Finally, it will be convenient to introduce the following definition. 
\begin{definition}
For $K\in \conbod$, the \textit{Bezout constant} is given by
$$b_2(K)=\sup \frac{V_n(L_1, L_2, K[n-2]) V_n(K)}{V_n(L_1,K[n-1])V_n(L_2,K[n-1])},$$ where the supremum is over pairs of convex bodies $(L_1,L_2)\in \left(\conbod\right)^2$.
\end{definition}
Denote $F_{K,b}(A,B):=b V(A,K[n-1])V(B,K[n-1])-V(A,B,K[n-2]) V(K)$, then $b_2(K)$ can be equivalently defined as the least $b\geq 1$ such that $F_{K,b}\geq 0$. Notice that $b_2(TK)=b_2(K)$ for every affine transformation $T$. 

Clearly, $F_K=F_{K,1}.$ A Blaschke selection argument shows that the supremum is actually a maximum; in particular $b_2(K)<\infty$ for all $K\in\conbod.$ In fact\footnote{this is known as Fenchel inequality, see \cite{FGM}}, $b_2(K)\leq 2$ for any $K$. Notice that  $b_2(K) >1$ is equivalent to not having $F_K\geq 0$ (i.e. to existence of a pair $(A,B)$ such that $F_K(A,B)<0$).

\section{An excluding condition with isoperimetric ratios}
\label{sec:exclude}
If a property $\mathcal{P}$ (for instance, being decomposable) is such that when $K$ has $\mathcal{P}$, then $F_K$ cannot be non-negative (on all of $(\mathcal{K}^n)^2$), we shall say that $\mathcal{P}$ is an excluding condition (in the terminology of \cite{SZ}, a convex body $K$ cannot both satisfy $\mathcal{P}$ and satisfy \emph{Bezout inequalities}). It was shown in \cite{SSZ1,SSZ2} that being weakly decomposable (a property which in particular includes being a polytope other than an $n$-simplex, or being decomposable) is an excluding condition. Denote $\mathcal{K}_F$ the subclass of $\mathcal{K}^n$ consisting of convex bodies having at least one facet : $\mathcal{K}_F$ is closed under Minkowski addition, and contains the class of $n$-polytopes. In this section we give a new excluding condition, which concerns bodies $K\in \mathcal{K}_F$. We will work in $\R^n$ with $n\geq 3$.

Let $K\subset \R^n$ be a non-empty compact convex set. Recall there exists a unique affine subspace $H$ of $\R^n$, such that $K\subset H$, and $H$ has maximal (affine) co-dimension. The dimension of $K$ is defined as the dimension of this subspace $H$. Alternatively $\dim(K)$ can be defined as the maximal $k\geq 1$, such that one may find $k+1$ affinely independent points, within $K$.

Let $k\geq 2$ and let $K \in \mathcal{K}^n$ be $k$-dimensional.
Then denote $$\Isop(K)=\frac{1}{k}\frac{|\partial K|_{k-1}}{|K|_k}.$$

\begin{proposition}
\label{proposition:main}
Let $K$ be a convex body such that $K$ has a facet $F$ satisfying :
$\Isop(F)>\Isop(K)$. Then $b_2(K)>1$.
\end{proposition}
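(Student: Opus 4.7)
My plan is to exhibit a pair $(A,B)\in(\mathcal{K}^n)^2$ such that $F_K(A,B)<0$. Let $u_0$ denote the outer unit normal of $F$, and translate $K$ so that $F\subset u_0^\perp$ with $0\in F$; then $h_K(u_0)=0$, $K\subset\{\langle\,\cdot\,,u_0\rangle\le 0\}$, and $S_K$ has a Dirac atom of mass $|F|_{n-1}$ at $u_0$. I take $A=[0,u_0]$ and, for a continuous function $f$ on $\Sph$ concentrated near $u_0$ with $f(u_0)=-1$, I take $B=W_t:=W(\Sph,h_K+tf)$ for small $t>0$. Geometrically $W_t$ is essentially $K$ with a thin slab adjacent to $F$ cut off.

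Property (viii) gives $V_n(A,K[n-1])=\tfrac{1}{n}|\pi_{u_0}K|_{n-1}$ and $V_n(A,W_t,K[n-2])=\tfrac{1}{n}V_{n-1}(\pi_{u_0}W_t,\pi_{u_0}K[n-2])$. By Alexandrov's variational lemma (Lemma \ref{lemma:variationalmixed}), with $f$ concentrated at $\{u_0\}$ one has $V_n(W_t,K[n-1])=V_n(K)-\tfrac{t}{n}|F|_{n-1}+o(t)$. So the first-order expansion of $F_K(A,W_t)$ in $t$ hinges on $\tfrac{d}{dt}V_{n-1}(\pi_{u_0}W_t,\pi_{u_0}K[n-2])\big|_{t=0}$. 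The projection $\pi_{u_0}W_t$ generally depends on $f$ in directions beyond $u_0^\perp$, and the simplex case $F_\Delta\ge 0$ shows that a naive computation using only $f|_{u_0^\perp}$ inside $u_0^\perp$ would give the wrong answer. Lemma \ref{lemma:pointwiseCV} is precisely the tool to correctly evaluate $\tfrac{d}{dt}h_{W_t}(v)|_{t=0}$ on the support of the relevant measure on $\mathbb{S}^{n-2}$, and to identify the "projection-shrinkage" contribution driven by the atom at $u_0$.

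Assembling the pieces via the integral formula (\ref{eqn:integralformula}), the slicing identity $V_n(K)=\int|K_s|_{n-1}\,ds$ and its mixed-volume analogue for $V_n(F,K[n-1])$ and $V_{n-1}(F,\pi_{u_0}K[n-2])$, the first-order coefficient of $F_K(A,W_t)/t$ at $t=0$ reorganizes, after rewriting $|\partial F|_{n-2}$ and $|\partial K|_{n-1}$ via mixed volumes with the unit balls $B_2^{n-1}\subset u_0^\perp$ and $B_2^n$, into a positive multiple of $\Isop(K)-\Isop(F)$. The hypothesis $\Isop(F)>\Isop(K)$ then makes this coefficient strictly negative, so $F_K(A,W_t)<0$ for sufficiently small $t>0$, and $b_2(K)>1$ as claimed.

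The main obstacle is the accurate computation of $\tfrac{d}{dt}V_{n-1}(\pi_{u_0}W_t,\pi_{u_0}K[n-2])|_{t=0}$: Lemma \ref{lemma:pointwiseCV} only gives evolution of $h_{W_t}(v)$ at $S_K$-a.e.\ $v\in\Sph$, while the integrand we want lives against $dS_{\pi_{u_0}K}$, a different measure on $\mathbb{S}^{n-2}\subset u_0^\perp$. Carefully matching these two measures, and quantifying the indirect contribution at directions where $h_{W_t}(v)$ does not evolve with slope $f(v)$ but is "dragged along" by the facet $F$ being pushed in, is where Lemma \ref{lemma:pointwiseCV} plays its decisive role and where the bulk of the work lies.
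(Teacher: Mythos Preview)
Your proposal does not actually carry out a proof: you yourself flag that ``the bulk of the work lies'' in computing $\tfrac{d}{dt}V_{n-1}(\pi_{u_0}W_t,\pi_{u_0}K[n-2])\big|_{t=0}$ and in reconciling the mismatch between $S_K$ and $S_{\pi_{u_0}K}$, but you never do this computation. The sentence asserting that everything ``reorganizes \ldots\ into a positive multiple of $\Isop(K)-\Isop(F)$'' is a hope, not an argument. Worse, with your choice $A=[0,u_0]$, the quantities that appear are projection volumes $|\pi_{u_0}K|_{n-1}$ and $V_{n-1}(\pi_{u_0}W_t,\pi_{u_0}K[n-2])$; there is no visible mechanism by which $|\partial K|_{n-1}$ or $|\partial F|_{n-2}$ (hence $\Isop(K)$ and $\Isop(F)$) enter. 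Indeed your ``dragging'' derivative depends on the local geometry of $\partial K$ near $\partial F$ in a way that, for a simplex, is governed by the apex height rather than by $|\partial F|_{n-2}$, so there is no reason to expect the claimed reorganization.

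The paper avoids this difficulty altogether by a different and much cleaner choice of test bodies. Instead of a segment, it takes $M=H_{u_0}^-\cap B_2^n$, a half-ball whose unique facet $M^{u_0}$ is a full $(n-1)$-ball; instead of pushing $F$ inward, it perturbs $K$ outward with $f\ge 0$, $f(u_0)=1$. Then the mixed area measure $\sigma=S(M,K[n-2],\cdot)$ has an atom at $u_0$ equal to $V_{n-1}(M^{u_0},F[n-2])=V_{n-1}(B_2^{n-1},F[n-2])=\tfrac{1}{n-1}|\partial F|_{n-2}$, which is exactly how $|\partial F|_{n-2}$ enters; and the inclusion $M\subset B_2^n$ gives $V_n(M,K[n-1])\le \tfrac{1}{n}|\partial K|_{n-1}$ by monotonicity, which is how $|\partial K|_{n-1}$ enters. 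Lemma~\ref{lemma:pointwiseCV} is used only at the single direction $u_0$ (where $S_K$ has an atom), so the measure-mismatch problem you identify never arises. Two elementary one-sided estimates then yield $F_K(L_t,M)\le \tfrac{t}{n}(\epsilon c-c_0)<0$ directly.
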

\begin{proof}
Assume $K$ has a facet $F=K^{u_0}$ such that $\frac{|\partial F|_{n-2}}{(n-1)|F|_{n-1}} > \frac{|\partial K|_{n-1}}{n|K|_n}$. Set $$c_0 := \frac{|\partial F|_{n-2}|K|_n}{n-1} - \frac{|\partial K|_{n-1}|F|_{n-1}}{n} >0,$$ and $c=\frac{2}{n-1}|\partial F|_{n-2}|K|_n > 2c_0$. Fix $\epsilon >0$ so that $c_0>\epsilon c$.

Since $S_K(\{u_0\})=|F|_{n-1} >0$ and $S_K(\mathbb{S}^{n-1})=|\partial K|_{n-1} <+\infty$, one may choose $f$ a non-negative and continuous function on the sphere, such that $f(u_0)=\max_{\mathbb{S}^{n-1}} f=1$ and $\int f(u) dS_K(u) < (1+\epsilon) S_K(u_0)=(1+\epsilon) |F|_{n-1}$. Fix such a positive function $f$, and define $L_t=W(h_K+tf)$, the Wulff shape with respect to function $h_K+tf$. One may think of $L_t$ as a perturbed version of $K$, with most of the perturbation in direction $u_0$.

\vspace{2mm}
Set $M=H_{u_0}^- \cap B_2^n=\{x\in B_2^n : \langle x,u_0\rangle \leq 0\}$ to be a half-euclidean ball, such that its unique facet is the euclidean ball $M^{u_0}=\pi_{u_0^{\perp}}(M)$ with $u_0$ as an outer normal vector. We will show that $F_K(A,B)<0$, for $A=L_t$, $B=M$, and $t>0$ is small enough, proving that $b_2(K)>1$.

 Assume $t\geq0$. Recall that the mixed surface area measure $\sigma:=S(M,K[n-2],.)$ is a non-negative measure, and that, since $f\geq 0$, $h_K(u) \leq h_{L_t}(u)$, for all $u\in \Sph$. It follows that :
\begin{align*}V_n(L_t,M,K[n-2])-V_n(M,K[n-1])&=\frac{1}{n}  \int (h_{L_t}-h_K)(u) d\sigma(u) 
\\
&\geq \frac{1}{n}  (h_{L_t}-h_K)(u_0) \sigma(\{u_0\}). \end{align*}
It follows from Lemma~\ref{lemma:pointwiseCV} that $h_{L_t}(u_0)-h_K(u_0)>t (1-\epsilon) f(u_0)=t (1-\epsilon)$, for $0<t<t_0(\epsilon)$. Also, $\sigma(\{u_0\})=S(M,K[n-2],u_0)=V_{n-1}(M^{u_0},K^{u_0}[n-2])=V_{n-1}(B_2^{n-1},F[n-2])=\frac{|\partial F|_{n-2}}{n-1}$.

Therefore, when $0<t<t_0(\epsilon)$ :
$$V_n(L_t,M,K[n-2])-V_n(M,K[n-1]) \geq (1-\epsilon) \frac{t}{n(n-1)} |\partial F|_{n-2}.$$
On the other hand, for any $t>0$ :
\begin{align*}V_n(L_t,K[n-1])-V_n(K)&=\frac{1}{n} \int (h_{L_t}-h_K)(u) dS_K(u) 
\\
&\leq \frac{t}{n} \int f(u)dS_K(u) < (1+\epsilon) \frac{t}{n} |F|_{n-1}.\end{align*}
where the last inequality is by the choice of $f$.
Also, by monotonicity of mixed volumes : 
$$V_n(M,K[n-1]) \leq V_n(B_2^n,K[n-1])=\frac{1}{n} |\partial K|_{n-1}.$$
We may now conclude by simple computations. 
For ease of notations, set $a_2=V_n(L_t,M,K[n-2])$, $a_0=V_n(K)$, $a_t=V_n(L_t,L[n-1])$ and $a_m=V_n(M,K[n-1])$. We shall show that $F_K(L_t,M)<0$, which rewrites as $(a_t-a_0)a_m - (a_2-a_m)a_0 <0$.

Denote $b_1=(a_2-a_m)a_0$ and $b_2=(a_t-a_0)a_m$, so that $F_K(L_t,M)=b_2-b_1$. The above lower and upper bounds give us : $b_1 \geq (1-\epsilon) \frac{t}{n(n-1)} |\partial F|_{n-2}|K|_n$ and $b_2\leq (1+\epsilon) \frac{t}{n^2} |\partial K|_{n-1}|F|_{n-1}$.

It follows that :
\begin{align*} F_K(L_t,M) &\leq  \epsilon \frac{t}{n} \left( \frac{|\partial F|_{n-2}|K|_n}{n-1} +  \frac{|F|_{n-1}|\partial K|_{n-1}}{n}\right)- \frac{t}{n} \left( \frac{|\partial F|_{n-2}|K|_n}{n-1} - \frac{|F|_{n-1}|\partial K|_{n-1}}{n}\right) 
\\
&\leq \frac{t}{n} (\epsilon c - c_0) <0 .
\end{align*}
\end{proof}

Recall that $F_K\geq 0$ is an affine-invariant property. On the other hand, the quantity
$$\sup_F \frac{\Isop(F)}{\Isop(K)}=\sup_{F} \frac{n |\partial F|_{n-2} |K|_n}{(n-1)|F|_{n-1} |\partial K|_{n-1}},$$ where the supremum is over the facets, is not affine-invariant. Thus, the above proposition immediately implies the following one.
\begin{proposition}
\label{p_facets_image}
Let $K$ be a convex body such that one of its affine pairs $K'=TK$ has a facet $F'$ satisfying :
$$\frac{|\partial F'|_{n-2}}{(n-1)|F'|_{n-1}} > \frac{|\partial K'|_{n-1}}{n|K'|_n}.$$
Then $b_2(K)>1$.
\end{proposition}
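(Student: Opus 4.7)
The plan is to exhibit a specific pair $(A,B)$ for which $F_K(A,B)<0$. The hypothesis concentrates relevant information at the facet $F=K^{u_0}$, so I would choose $B$ to make the mixed surface area measure $\sigma=S(M,K[n-2],\cdot)$ carry a large atom at $u_0$, and I would take $A$ to be a perturbation of $K$ that mostly pushes $K$ in direction $u_0$. Concretely, let $B=M$ be the half-ball $B_2^n\cap\{x:\langle x,u_0\rangle\leq 0\}$, so that its unique facet $M^{u_0}$ is the Euclidean $(n-1)$-ball, and consequently $\sigma(\{u_0\})=V_{n-1}(M^{u_0},F[n-2])=\frac{|\partial F|_{n-2}}{n-1}$. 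Let $A=L_t=W(h_K+tf)$ be a Wulff-shape perturbation with $f\in\mathcal{C}(\Sph,\R)$ non-negative, $f(u_0)=\max f=1$, and (using $S_K(\{u_0\})=|F|_{n-1}>0$ and finiteness of $S_K$) $\int f\,dS_K\leq(1+\epsilon)|F|_{n-1}$ for prescribed $\epsilon>0$.

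Next, I would establish three linear-in-$t$ estimates. First, since $f\geq 0$ implies $L_t\supset K$, hence $h_{L_t}-h_K\geq 0$ on $\Sph$, and $\sigma\geq 0$, the integral representation gives
\[
V_n(L_t,M,K[n-2])-V_n(M,K[n-1])=\tfrac{1}{n}\int(h_{L_t}-h_K)\,d\sigma\geq\tfrac{1}{n}(h_{L_t}(u_0)-h_K(u_0))\,\sigma(\{u_0\}).
\]
Lemma \ref{lemma:pointwiseCV} yields $h_{L_t}(u_0)-h_K(u_0)\geq(1-\epsilon)t$ for small $t>0$, so this is bounded below by $(1-\epsilon)\frac{t|\partial F|_{n-2}}{n(n-1)}$. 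Second, by definition of the Wulff shape, $h_{L_t}-h_K\leq tf$ on $\Sph$, hence
\[
V_n(L_t,K[n-1])-V_n(K)=\tfrac{1}{n}\int(h_{L_t}-h_K)\,dS_K\leq\tfrac{t}{n}\int f\,dS_K\leq(1+\epsilon)\tfrac{t|F|_{n-1}}{n}.
\]
Third, monotonicity of mixed volumes gives $V_n(M,K[n-1])\leq V_n(B_2^n,K[n-1])=\frac{|\partial K|_{n-1}}{n}$.

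Finally, I would expand
\[
F_K(L_t,M)=[V_n(L_t,K[n-1])-V_n(K)]V_n(M,K[n-1])-[V_n(L_t,M,K[n-2])-V_n(M,K[n-1])]V_n(K),
\]
where the cancellation $V_n(K)V_n(M,K[n-1])$ is crucial. Plugging in the three bounds, the leading coefficient in $t$ is, up to an $O(\epsilon)$ error,
\[
\tfrac{t}{n}\!\left(\tfrac{|F|_{n-1}|\partial K|_{n-1}}{n}-\tfrac{|\partial F|_{n-2}|K|_n}{n-1}\right),
\]
which is strictly negative by the hypothesis $\Isop(F)>\Isop(K)$. Choosing $\epsilon>0$ small compared to this gap, and then $t>0$ small, yields $F_K(L_t,M)<0$. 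The main obstacle is the lower bound on $V_n(L_t,M,K[n-2])-V_n(M,K[n-1])$: Alexandrov's variational formula alone is insufficient because it averages, so the atomic contribution of $\sigma$ at $u_0$ would be diluted; extracting the full atom requires the pointwise (not merely integrated) convergence of $h_{L_t}(u_0)$ to $h_K(u_0)+tf(u_0)$, which is exactly the content of Lemma \ref{lemma:pointwiseCV} and is the technical heart of the argument. The other subtlety is selecting $f$ so that $\int f\,dS_K$ is genuinely close to $S_K(\{u_0\})$; this is the only place where the assumption that $F$ is a facet (i.e., a true atom of $S_K$) is used.
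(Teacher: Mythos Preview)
Your argument is correct and matches the paper's approach line by line: the half-ball $M$, the Wulff-shape perturbation $L_t$ with $f$ concentrated near $u_0$, the three estimates (lower bound via the atom $\sigma(\{u_0\})$ and Lemma~\ref{lemma:pointwiseCV}, upper bound via $h_{L_t}\le h_K+tf$, and monotonicity for $V_n(M,K[n-1])$), and the final expansion of $F_K(L_t,M)$ are exactly the paper's proof of Proposition~\ref{proposition:main}.

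One small omission: the hypothesis of Proposition~\ref{p_facets_image} is on an affine image $K'=TK$, not on $K$ itself, yet you write $F=K^{u_0}$ throughout. You need one sentence invoking the affine invariance $b_2(K)=b_2(TK)$ (noted in the paper just before this proposition) to pass from $b_2(K')>1$ to $b_2(K)>1$, or equivalently to reduce to the case $T=\mathrm{id}$. This is precisely how the paper derives Proposition~\ref{p_facets_image} from Proposition~\ref{proposition:main}.
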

This yields that, if $\max_T \sup_F \frac{\Isop(TF)}{\Isop(TK)} >1$,
where the max is over $T\in O(n),$ then $b_2(K)>1.$ Let us list specific examples. Throughout, we will denote $|\cdot|$ for the Lebesgue measure (either of dimension $n$, $n-1$, or $n-2$).

\begin{itemize}
\item[a-] The unit cube has volume $1$, and so does any of its facet. Thus $|\partial C_n|=2n$, and $\Isop(C_n)=\frac{|\partial C_n|}{n|C_n|}=2$. Since each of its facets is a unit cube itself (of dimension $n-1$), they also satisfy $\Isop(F)=2$. Therefore $\max_F \frac{\Isop(F)}{\Isop(C)}=1$.
 But choose for $T$ the affine transform such that $Te_i=e_i$ for all $i\neq 1$, and $Te_1=2e_1$. Let $C'=TC$ be the resulting box.

\noindent Then, $\frac{|\partial F|}{(n-1)|F|}=2$ for the facet with outer normal $e_1$, while $\Isop(C')=2-\frac{1}{n}$. It follows that $\frac{\Isop(F)}{\Isop(C')}>1$, and hence $b_2(C)=b_2(C')>1$.

\item[b-] The octahedron. It has $2^n$ facets, each of them is a regular simplex of volume $|F|_{n-1}=\frac{\sqrt{n}}{(n-1)!}$.
Let $\Delta_{k}$ denote a regular $k$-simplex with edge length $\sqrt{2}$ (so $|\Delta_k|_k=\frac{\sqrt{k+1}}{k!}$). Then, if $n\geq 3$ :
 \begin{align*}
 \frac{\Isop(F)}{\Isop(O_n)}&=\frac{n|O_n||\partial F|}{(n-1)|\partial O_n| |F|}=\frac{n}{n-1} \frac{n |\Delta_{n-2}|}{n! |F|^2}
 \\
 &=\frac{n}{n-1}\frac{(n-1)!^2}{n! (n-2)!} \sqrt{n-1}=\sqrt{n-1} >1.
 \end{align*}
 \item[c-] The cylinders. Let $L$ be an $(n-1)$-dimensional convex body. Define $C=Conv(L,L')$ where $L'=L+te_n$ is a translate of $L$ parallel to it. Then $|C|=t|L|$, thus :
 $$
  \frac{\Isop(L)}{\Isop(C)}=\frac{n|C||\partial L|}{(n-1)|\partial C| |L|}=\frac{t n}{n-1} \frac{|\partial L|}{|\partial C|}=\frac{t n}{n-1} \frac{|\partial L|}{(2|L|+t|\partial L|)}.
 $$
 This ratio is greater than $1$ for large $t$ (as soon as $t>2\frac{n}{n-1}(\Isop(L))^{-1}$).
 Letting $T$ be an appropriate affine transform, i.e. $Te_n=t_0 e_n$, and $Te_i=e_i$ ($i\leq n-1$), where $t_0$ is large enough, one deduces that $b_2(C)=b_2(TC)>1$, because $\frac{\Isop(TL)}{\Isop(TC)}=\frac{\Isop(L)}{\Isop(TC)}>1$.

\item[d-] The half-ball. Denote $H_+=\{x_n\geq 0\}$ a closed half-space, $B_2^n$ the (unit) Euclidean ball, and $M=H_+ \cap B_2^n$ a half-ball. Then $|M|_n=\kappa_n /2$, $|\partial M|_{n-1}=\frac{n \kappa_n}{2}+\kappa_{n-1}$, and so $\Isop(M)=1+\frac{2\kappa_{n-1}}{n\kappa_n}$. The unique facet of $M$ is $F \approx B_2^{n-1}$, and so $\Isop(F)=1$. Hence $\frac{\Isop(F)}{\Isop(M)}<1$. 

Nonetheless, one can find an affine transform $T$ such that $\frac{\Isop(TF)}{\Isop(TM)}>1$, showing that $b_2(M)>1$.
Indeed, let $c>1$ be large enough and let $T$ be the unique affine transform such that $Te_n=ce_n$, and $Te_i=e_i$ for $i\leq n-1$. Then computations show that $\Isop(TM)=\frac{|\partial \mathcal{E}|}{n|\mathcal{E}|}+\frac{2\kappa_{n-1}}{n\kappa_n c}<\frac{|\partial \mathcal{E}|}{n|\mathcal{E}|}+\frac{1}{c} <1= \Isop(TF)=\Isop(F)$, showing $b_2(M)=b_2(TM)<1.$
(see appendix for computational details)

 \end{itemize}

In the above four examples (unit cube, octahedron $O_n$, cylinders, half-balls), we used Proposition \ref{proposition:main} (or Proposition \ref{p_facets_image}) to argue that $b_2(K)>1$. For these examples, the fact that $b_2(K)>1$ was already known : the cylinder, like the cube, is decomposable, the half-ball has (many) points of positive curvature on its boundary, and it was directly shown in [SZ15] that $b_2(O_n)=2$ (by taking well-chosen segments). The half-ball example suggests that $\max_F \frac{\Isop(F)}{\Isop(K)}$ could be minimal when $K$ is in its John's position.
 
 As a sanity check, one may wish to compute $\max_F \frac{\Isop(F)}{\Isop(\Delta)}$, when $\Delta$ is an $n$-simplex, to check this quantity is less than $1$. If $\Delta=T_n$ is a regular simplex with edge length $\sqrt{2}$, then $|T_n|=\frac{\sqrt{n+1}}{n!}$, and $|\partial T_n|=(n+1)|T_{n-1}|=(n+1) \frac{\sqrt{n}}{(n-1)!}$, so that $\Isop(T_n)=\sqrt{n(n+1)}$. It results that for a regular $n$-simplex :
 $$\max_F \frac{\Isop(F)}{\Isop(T_n)}=\frac{\sqrt{n(n-1)}}{\sqrt{n(n+1)}}=\sqrt{\frac{n-1}{n+1}}.$$

If $\Delta_n=Conv(0,e_1, ... , e_n)$, then $|\Delta_n|=\frac{1}{n!}$, and $|\partial \Delta_n|=\frac{n}{(n-1)!}+\frac{\sqrt{n}}{(n-1)!}$, hence $\Isop(\Delta_n)=n+\sqrt{n}$. There are two kinds of facets : one is $\Delta_{n-1}$, the other one is $T_{n-1}$. Their isoperimetric ratios are respectively $\Isop(T_{n-1})=\sqrt{n(n-1)}$ and $\Isop(\Delta_{n-1})=n-1+\sqrt{n-1}> \Isop(T_{n-1})$. Therefore :
 $$\max_F \frac{\Isop(F)}{\Isop(\Delta_n)}=\frac{\Isop(\Delta_{n-1})}{\Isop(\Delta_n)}=\frac{n-1+\sqrt{n-1}}{n+\sqrt{n}}.$$
 Note that $\max_F \frac{\Isop(F)}{\Isop(\Delta_n)} >\max_F \frac{\Isop(F)}{\Isop(T_n)}$. One may conjecture that $\max_F \frac{\Isop(F)}{\Isop(T_n)}=\min_T \max_F \frac{\Isop(TF)}{\Isop(T\Delta_n)}$, i.e. that the (maximal) ratio is minimal when the simplex is in its John's position.


Finally, we remark that Proposition~\ref{p_facets_image} yields a more concise proof of the following result, which states that having infinitely many facets, is an excluding condition.
\begin{theorem}[Theorem 4.2 in \cite{SSZ2}]
\label{theorem:infinite}
\label{infinitecond}
Let $K$ be a convex body with infinitely many facets. Then $b_2(K)>1$.
\end{theorem}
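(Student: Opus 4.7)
The plan is to derive Theorem~\ref{theorem:infinite} as an immediate corollary of Proposition~\ref{proposition:main} by exhibiting a single facet $F$ of $K$ with $\Isop(F)>\Isop(K)$; no affine transformation will even be needed.

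First I would use the structure of the surface area measure. For a convex body $K$, $S_K$ is a finite measure on $\Sph$ with total mass $|\partial K|_{n-1}<\infty$. Each facet $F$ of $K$ corresponds to a distinct atom of $S_K$, located at its outer normal $u_F\in\Sph$, of mass $S_K(\{u_F\})=|F|_{n-1}$ (this is the polytopal formula $S_P=\sum_{u\in E(P)}|P^u|_{n-1}\delta_u$ from Section~\ref{sec_notation}, which persists as an identity for atoms under the weak approximation defining $S_K$ in general). Thus if $K$ has infinitely many facets $(F_k)_{k\geq 1}$, the inequality $\sum_k|F_k|_{n-1}\leq|\partial K|_{n-1}<+\infty$ forces $|F_k|_{n-1}\to 0$ along a suitable enumeration.

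Second I would bring in the classical isoperimetric inequality inside the affine hyperplane spanned by each facet. Each $F_k$ is an $(n-1)$-dimensional convex body, so there is a dimensional constant $\gamma_n>0$ with $|\partial F_k|_{n-2}^{n-1}\geq\gamma_n\,|F_k|_{n-1}^{n-2}$. Dividing by $(n-1)^{n-1}|F_k|_{n-1}^{n-1}$ yields
$$\Isop(F_k)\;=\;\frac{|\partial F_k|_{n-2}}{(n-1)|F_k|_{n-1}}\;\geq\;\frac{\gamma_n^{1/(n-1)}}{n-1}\,|F_k|_{n-1}^{-1/(n-1)}\xrightarrow[k\to\infty]{}+\infty.$$
Since $\Isop(K)$ is a fixed finite number, any sufficiently small facet $F_k$ satisfies $\Isop(F_k)>\Isop(K)$, and Proposition~\ref{proposition:main} concludes that $b_2(K)>1$.

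I do not expect any genuine obstacle. The only step requiring a moment's care is justifying that infinitely many facets indeed produce atoms of $S_K$ of vanishing mass; this is immediate once one recalls the correspondence facet $\leftrightarrow$ atom of $S_K$. Everything else is a combination of the finiteness of $|\partial K|_{n-1}$ and the fact that, on a convex body, the isoperimetric ratio blows up as volume shrinks — a trivial consequence of the Euclidean isoperimetric inequality in $\R^{n-1}$.
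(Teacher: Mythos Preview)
Your argument is correct and is essentially identical to the paper's own proof: both use that the total surface area $|\partial K|_{n-1}$ is finite so that the facet volumes $|F_k|_{n-1}$ tend to zero, and then apply the $(n-1)$-dimensional isoperimetric inequality to get $\Isop(F_k)\geq c_n\,|F_k|_{n-1}^{-1/(n-1)}\to\infty$, after which Proposition~\ref{proposition:main} concludes. The paper even writes the same lower bound explicitly as $\Isop(C)\geq \kappa_d^{1/d}/|C|_d^{1/d}$.
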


\begin{proof}
If $K$ has infinitely many facets, then infinitely many of them will satisfy $\Isop(F)>\Isop(K),$
so that Theorem \ref{theorem:infinite} follows from Proposition \ref{proposition:main}.

Let  $C$ be a $d$-dimensional convex body. Then 
$$\Isop(C)=\frac{1}{d}\frac{|\partial C|_{d-1}}{|C|_{d}}=\frac{1}{d} \frac{|\partial C|_{d-1}}{|C|_{d}^{d-1/d}}|C|_{d}^{-1/d} \geq \frac{1}{d}\frac{|\partial B_2^d|_{d-1}}{|B_2^d|_d^{d-1/d}}|C|_{d}^{-1/d}=\frac{\kappa_d^{1/d}}{|C|_{d}^{1/d}},$$ where we used the isoperimetric inequality, and where $\kappa_d$ denotes the volume of the $d$-dimensional euclidean ball.

It follows that $\Isop(F) \to +\infty$ when $|F|_{n-1} \to 0$. Since a convex body only has finite surface area measure, it follows that if $K$ has infinitely many facets, then all but finitely many of them will satisfy $\Isop(F)>\Isop(K)$.
\end{proof}

We conclude this section with the following question.

\noindent \emph{Question 1:}
Let $P$ be a polytope, other than a simplex. Denote $\mathcal{F}_{n-1}(P)$ the set of its facets. Do we necessarily have $\max_{T\in O(n)} \max_{F\in \mathcal{F}_{n-1}(P)} \frac{\Isop(F)}{\Isop(P)} >1 ?$

In words : (if $P$ is not a simplex) does there always exist an affine transform $T$ such that  $P'=TP$ has at least one facet $F'$ satisfying $\Isop(F')>\Isop(P')$ ?

\noindent If the answer is  positive, then it would yield another proof of Theorem  \ref{theorem:sszpolytopes} (by applying Proposition \ref{proposition:main}).

\section{Appendix}
\subsection{Wulff Shape Lemma}
We recall the statement of Alexandrov's variational lemma for mixed volumes, Lemma~\ref{lemma:variationalmixed}, and then provide a proof.
\begin{lemma}[Alexandrov's variational lemma, mixed volume version]
Assume $K$ is a convex body, $supp(S_K)\subset \Omega$, and $f\in \mathcal{C}(\Omega, \R)$. Denote $W_t=W(\Omega, h_K+tf)$, $t\in \R$. Denote $V_1(t)=V_n(W_t,K[n-1])$. Then $(t\mapsto V_1(t))$ is differentiable\footnote{on both sides} at $0$, and :
\begin{equation*} 
\diff{V_1(t)}{t}\bigg|_{t=0}=\lim_{t\to 0}\frac{V_1(t)-V_n(K)}{t}=\frac{1}{n}\int_{\mathbb{S}^{n-1}}f(u)dS_K(u),
 \end{equation*}
\end{lemma}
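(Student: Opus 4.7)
The plan is to combine two ingredients: the integral representation of the first mixed volume, which turns $V_1(t)$ into $\frac{1}{n}\int h_{W_t}\, dS_K$, and Minkowski's first inequality $V_1(t)^n \geq V_n(W_t)\,V_n(K)^{n-1}$, which lets me transfer the already-known variational formula for $V_n(W_t)$ into a matching lower bound on the difference quotient.

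First I would fix the setup. Because $\mathrm{supp}(S_K)\subset\Omega$ and $K$ is recovered as $W(\Omega,h_K)$, one has $W_0=K$, hence $V_1(0)=V_n(K)$, and
$$V_1(t)-V_n(K)=\tfrac{1}{n}\int_{\Omega}\bigl(h_{W_t}(u)-h_K(u)\bigr)\,dS_K(u).$$
By construction of the Wulff shape, $h_{W_t}(u)\leq h_K(u)+tf(u)$ for every $u\in\Omega$. Plugging this in gives the \emph{easy} one-sided bounds: for $t>0$ small,
$$\frac{V_1(t)-V_n(K)}{t}\leq \frac{1}{n}\int f(u)\,dS_K(u),$$
and for $t<0$ small (where dividing by $t$ reverses the inequality), the same quantity is $\geq \tfrac{1}{n}\int f\,dS_K$. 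These bounds control $V_1(t)$ from the \emph{wrong} side for differentiability, so the real work is obtaining the matching bounds.

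For the matching bounds I invoke Minkowski's first inequality, namely $V_1(t)^n\geq V_n(W_t)V_n(K)^{n-1}$, which rewrites as
$$V_1(t)-V_n(K)\;\geq\; V_n(K)^{(n-1)/n}\bigl(V_n(W_t)^{1/n}-V_n(K)^{1/n}\bigr).$$
Dividing by $t>0$ and letting $t\to 0^+$, the volume version of Alexandrov's variational lemma (Theorem~\ref{eq_Aleksandrov_vol}, already in hand) combined with the chain rule for $x\mapsto x^{1/n}$ at $V_n(K)>0$ gives
$$\liminf_{t\to 0^+}\frac{V_1(t)-V_n(K)}{t}\;\geq\; V_n(K)^{(n-1)/n}\cdot\frac{1}{n}V_n(K)^{(1-n)/n}\int f\,dS_K=\tfrac{1}{n}\int f\,dS_K.$$
Dividing the same Minkowski inequality by $t<0$ (flipping the sign) and letting $t\to 0^-$ gives the corresponding $\limsup\leq \tfrac{1}{n}\int f\,dS_K$ from the left. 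Combining each matching pair establishes the right- and left-derivatives and their common value, completing the proof.

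The main subtlety I anticipate is the legitimacy of the last step near $t=0$: I need $V_n(K)>0$ (which is given, since $K$ is a convex body) and I need $V_n(W_t)>0$ on an open neighborhood of $0$ so that $V_n(W_t)^{1/n}$ is smooth enough near $0$ to apply the chain rule through Alexandrov's volume lemma. This follows from the fact, recalled in the excerpt, that $V_n(W_t)>0$ for all $t>t_0(K)$ with $t_0(K)<0$, so a two-sided neighborhood of $0$ is indeed available. Once that is in place, the argument is symmetric on both sides of $0$, and the existence of the two-sided derivative with the stated value follows.
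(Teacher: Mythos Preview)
Your proof is correct, and it takes a genuinely different (and shorter) route than the paper's.

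Both arguments share the easy one-sided bounds coming from $h_{W_t}\le h_K+tf$ on $\Omega$ and the integral representation $V_1(t)=\tfrac{1}{n}\int h_{W_t}\,dS_K$. The difference lies in how the matching bounds are obtained. You apply Minkowski's first inequality $V_1(t)^n\ge V_n(W_t)V_n(K)^{n-1}$ once and then invoke the \emph{volume} version of Alexandrov's variational lemma as a black box, letting the chain rule on $x\mapsto x^{1/n}$ finish the job. The paper instead gives a self-contained argument that does not call on the volume lemma: it combines \emph{two} Minkowski-type inequalities, namely $V_1^n\ge V_n V_0^{n-1}$ and $V_{n-1}^n\ge V_0 V_n^{n-1}$ (with $V_k=V_n(W_t[k],K[n-k])$), to get $V_1-V_0\ge \alpha_t(V_n-V_{n-1})$ with $\alpha_t\to 1$, and then computes $\tfrac{V_n-V_{n-1}}{t}=\tfrac{1}{n}\int f\,dS_{W_t}\to\tfrac{1}{n}\int f\,dS_K$ directly from the identity $h_{W_t}=h_K+tf$ $S_{W_t}$-a.e.\ and weak continuity of $L\mapsto S_L$.

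What each approach buys: your reduction is cleaner and avoids introducing the auxiliary quantities $V_{n-1}(t)$, $\alpha_t$ and the weak convergence $S_{W_t}\to S_K$; the paper's argument is independent of the volume lemma and hence appropriate if one wants a parallel proof rather than a reduction. Since the paper states the volume version as a theorem just before this lemma, your reduction is entirely legitimate in context.
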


Let $K$ be a convex body, $\Omega$ a closed subset of $\Sph$ \emph{determining}\footnote{we are following Schneider's terminology \cite{Sch1}} $K$, meaning $K=\bigcap_{u\in \Omega} H^-(u,h_K(u))$, and let $f$ be a continuous \footnote{($f$ measurable and bounded on $\Omega$ would do as well)} function defined on $\Omega$. Recall we denote $W_t=W(\Omega, h_K+tf)$ the associated Wulff-shape perturbations : $W_t$ is a convex body if $t>t_0$.

Following Alexandrov's notations (see \cite{Al1}), we denote $V_k(t)=V_n(W_t[k], K[n-k])$ (in particular $V_0(t)=V_n(K)$ does not depend on $t$).
We prove $\frac{V_1(t)-V_0(t)}{t} \to \frac{1}{n} \int_{\Omega} f(u)dS_K(u)$ separately for $t\to 0^+$ then for $t\to 0^-$. For sake of clarity, we may omit some dependence from the notations : $V_1$ means $V_1(t)$, and similarly $V_k$ stands for $V_k(t)$.

\begin{proof}
For all $t$,  note that (by definition of $W_t$) $V_1-V_0=\frac{1}{n} \int_{\Omega} (h_{W_t}-h_K)(u) dS_K(u) \leq \frac{t}{n} \int_{\Omega} f(u) dS_K(u)$.
In particular, $\limsup_{t\to 0^+} \frac{V_1(t)-V_0}{t} \leq \frac{1}{n} \int_{\Omega} f(u)dS_K(u)$.

Let $\lambda_t=V_1(t)/V_0$, and $\mu_t=V_{n-1}(t)/V_n(t)$ (which are both well-defined and positive, as long as $t>t_0$). Then, using Brunn-Minkowski's inequality : 
\begin{equation*} \label{BMAl1}
(V_1-V_0) \sum_{r=0}^{n-1} \lambda_t^r=\frac{V_1^n-V_0^n}{V_0^{n-1}} \geq \frac{V_n V_0^{n-1}-V_0^n}{V_0^{n-1}}=V_n - V_0.
\tag{$\mathbf{BM1}$}
\end{equation*} And symmetrically :
 \begin{equation*} \label{BMAl2}
 (V_{n-1}-V_n) \sum_{r=0}^{n-1} \mu_t^r=\frac{V_{n-1}^n-V_n^n}{V_n^{n-1}} \geq \frac{V_0 V_n^{n-1}-V_n^n}{V_n^{n-1}}=V_0 - V_n.
 \tag{$\mathbf{BM2}$}
 \end{equation*}
 Therefore (combining the above two)
 $$V_1-V_0 \geq\alpha_t(V_n-V_{n-1}) \hspace{2mm} \text{ where } \alpha_t= \frac{ \sum_{r=0}^{n-1} \mu_t^r}{\sum_{r=0}^{n-1} \lambda_t^r}. $$
 Since $W_t \to K$ as $t\to 0$ (for the Hausdorff distance), continuity of $V_n(.)$ implies that $V_k(t)\to V_0$ (for all $k\leq n$), while (weak) continuity of $(L\mapsto S_L)$ implies that $S_{W_t} \to S_K$ (for the weak topology on Radon measures on the sphere). Therefore $\alpha_t \to 1$, while 
 \begin{align*}\frac{V_n-V_{n-1}}{t}&=\frac{1}{n} \int_{\Omega} \frac{(h_{W_t}-h_K)(u)}{t} dS_{W_t}(u)
 \\
 &=\frac{1}{n} \int_{\Omega} f(u) dS_{W_t}(u) \to \frac{1}{n} \int_{\Omega} f(u) dS_{K}(u)\end{align*}
 where we used that $h_{W_t}(u)=h_K(u)+tf(u)$ for $S_{W_t}$-a.e. $u\in \Omega$.
 Hence,
 $$\liminf_{t\to 0^+} \frac{V_1-V_0}{t} \geq \left(\lim_{t\to 0^+} \alpha_t\right)\left(\lim_{t\to0^+} \frac{V_n-V_{n-1}}{t}\right)=  \frac{1}{n} \int_{\Omega} f(u) dS_{K}(u).$$
 
 Similarly, for all $t_0<t<0$ : $\frac{V_1-V_0}{t} \geq  \frac{1}{n} \int_{\Omega} f(u) dS_{K}(u)$, while combining  \ref{BMAl1} and \ref{BMAl2} yields
  $$\limsup_{t\to 0^+} \frac{V_1-V_0}{t} \leq \left(\lim_{t\to 0^-} \alpha_t\right)\left(\lim_{t\to0^-} \frac{V_n-V_{n-1}}{t}\right)=  \frac{1}{n} \int_{\Omega} f(u) dS_{K}(u).$$
 \end{proof}

We recall the statement of the (almost-sure) pointwise convergence Lemma \ref{lemma:pointwiseCV} (which we used in the proof of Proposition \ref{proposition:main}), and provide a simple proof here.

\begin{lemma}
Let $(W_t)_t$ be Wulff-shape perturbations of a given convex body $K$, with respect to $(\Omega, f)$. Then for $S_K$-almost every $u\in \Sph$:
\begin{equation}
\diff{h_{W_t}(u)}{t}\bigg|_{t=0}=\lim_{t\to 0}\frac{h_{W_t}(u)-h_K(u)}{t}=f(u).
\end{equation}
\end{lemma}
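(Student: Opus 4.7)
The plan is to combine a one-sided pointwise bound on $h_{W_t}(u) - h_K(u)$, coming directly from the Wulff construction, with the integrated asymptotic formula of Lemma \ref{lemma:variationalmixed}, and to then squeeze out pointwise equality $S_K$-almost everywhere. For $u \in \Sph$ and $t \in (t_0, +\infty)$, set $\phi_u(t) := h_{W_t}(u)$, and for $t \neq 0$ set $\psi_u(t) := (\phi_u(t) - h_K(u))/t$.

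First, since $W_t = W(\Omega, h_K + tf)$ is by definition the intersection of halfspaces indexed by $v \in \Omega$, one immediately has $h_{W_t}(u) \leq h_K(u) + t f(u)$ for every $u \in \Omega$. Dividing by $t$ gives $\psi_u(t) \leq f(u)$ when $t > 0$, and $\psi_u(t) \geq f(u)$ when $t < 0$. Second, as noted in the preliminaries, $t \mapsto \phi_u(t)$ is concave on $(t_0, +\infty)$, and a standard computation then shows $\psi_u(t)$ is nonincreasing in $t$ on each side of $0$. Hence the one-sided derivatives $\phi_u'(0^\pm) := \lim_{t \to 0^\pm} \psi_u(t)$ exist, and the pointwise bounds sharpen to $\phi_u'(0^+) \leq f(u) \leq \phi_u'(0^-)$ for every $u \in \Omega$.

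Next I bring in Lemma \ref{lemma:variationalmixed}. Since $V_1(t) - V_n(K) = \frac{1}{n} \int_{\Sph} (h_{W_t} - h_K) \, dS_K$, that lemma rewrites as $\int \psi_u(t) \, dS_K(u) \to \int f(u) \, dS_K(u)$ as $t \to 0$. On the other hand, by the monotonicity of $\psi_u(\cdot)$ on each side of $0$ together with a uniform envelope $|\psi_u(t)| \leq \max(|f(u)|, |\psi_u(t_*)|)$ for any fixed reference point $t_* \neq 0$ (this envelope is $S_K$-integrable, since support functions of convex bodies are bounded on the sphere), dominated convergence yields $\int \psi_u(t) \, dS_K(u) \to \int \phi_u'(0^\pm) \, dS_K(u)$ as $t \to 0^\pm$.

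Combining the two limits gives $\int (f - \phi_u'(0^+)) \, dS_K = 0 = \int (\phi_u'(0^-) - f) \, dS_K$, where both integrands are $S_K$-a.e.\ nonnegative on $\mathrm{supp}(S_K) \subset \Omega$ by the one-sided pointwise bounds above. Hence $\phi_u'(0^+) = \phi_u'(0^-) = f(u)$ for $S_K$-almost every $u$, which is precisely the claimed two-sided differentiability with derivative $f(u)$. I don't anticipate a substantive obstacle beyond the routine bookkeeping for dominated convergence; the conceptual engine is the one-sided inequality baked into the Wulff construction, which is exactly what allows the equality-of-integrals conclusion (from Lemma \ref{lemma:variationalmixed}) to force pointwise equality $S_K$-a.e.\ rather than mere convergence along a subsequence.
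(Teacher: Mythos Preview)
Your proof is correct and follows essentially the same strategy as the paper's: the one-sided pointwise bound $h_{W_t}(u)\le h_K(u)+tf(u)$ from the Wulff construction, concavity of $t\mapsto h_{W_t}(u)$ to guarantee one-sided derivatives, and the integral identity of Lemma~\ref{lemma:variationalmixed} to squeeze out $S_K$-a.e.\ equality. Your dominated-convergence envelope $\max(|f(u)|,|\psi_u(t_*)|)$ is in fact slightly cleaner than the paper's treatment, which invokes Fatou's lemma for $t\to 0^+$ and, for $t\to 0^-$, builds a dominating function from the geometric inclusion $(1-C|t|)(K-x_0)\subset W_t$.
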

\begin{proof}

First, recall that, for all $u\in \Sph$, the map $(t\mapsto h_{W_t}(u))$ is concave on $]t_0, +\infty[$, where $t_0=t_0(K,\Omega,f)=\inf \{t\in \R : |W_t|_n>0\}$.

Indeed, Let $t_0<t_1<t_2$, and let $\lambda \in (0,1)$. Let $x\in (1-\lambda)W_{t_1}+\lambda W_{t_2}$, and let $u\in \Omega$. Let $x_i\in W_{t_i}$, $i=1,2$, such that $x=(1-\lambda)x_1+\lambda x_2$.  Denote $t_{\lambda}:=(1-\lambda)t_1+\lambda t_2.$ Then
$$\langle x,u\rangle=(1-\lambda)\langle x_1,u\rangle+\lambda \langle x_2,u\rangle\leq (1-\lambda)h_{W_{t_1}}(u)+\lambda h_{W_{t_2}}(u) \leq h_K(u)+ t_{\lambda} f(u).$$
As this holds for any $u\in \Omega$, one concludes that $(1-\lambda)W_{t_1}+\lambda W_{t_2} \subset W_{t_{\lambda}},$
from which concavity (on the domain $]t_0,+\infty[$) of $(t\mapsto h_{W_t}(u))$ ,for any fixed $u\in \Sph$, readily follows.

In particular, for any $u\in \Sph$,  the map $(t\mapsto h_{W_t}(u))$ is concave on a neighborhood of $t=0$, implying that $\lim_{t\to 0^+} \frac{(h_{W_t}-h_K)(u)}{t}$ and $\lim_{t\to 0^-} \frac{(h_{W_t}-h_K)(u)}{t}$, exist for any $u\in \Sph$. We now explain how to deduce that right and left derivative (at $t=0$) coincide, for $S_K$-a.e. $u\in \Sph$, from lemma \ref{lemma:variationalmixed}.

We next prove that $\lim_{t\to 0^+}  \frac{(h_{W_t}-h_K)(u)}{t} =f(u)$, for $S_K$-almost every $u$ (the proof on the left of $0$ is similar).
Let $l_d(u)=\lim_{t\to 0^+}  \frac{(h_{W_t}-h_K)(u)}{t}$, which is well-defined for all $u\in \Sph$, by concavity. Note that (by definition of $W_t$), $l_d(u)\leq f(u)$, for all $u$. Let $U_{\epsilon}=\{u\in \Sph : l_d(u)<f(u)-\epsilon\}$. It is enough to show that $S_K(U_{\epsilon})=0$ (for arbitrary $\epsilon>0$).

By Fatou's lemma, 
\begin{align*}\limsup_{t\to 0^+} \int_{U_{\epsilon}}  \frac{(h_{W_t}-h_K)(u)}{t} dS_K(u) &\leq   \int_{U_{\epsilon}}  \limsup_{t\to 0^+}\frac{(h_{W_t}-h_K)(u)}{t} dS_K(u)
\\
&= \int_{U_{\epsilon}} l_d(u)dS_K(u) 
\\
&\leq \int_{U_{\epsilon}}  f(u)dS_K(u) -\epsilon S_K(U_{\epsilon}).\end{align*} Therefore :
$$\lim_{t\to 0^+} \frac{V_1(t)-V_0}{t} \leq \frac{1}{n} \int_{\Omega} f(u)dS_K(u) - \frac{\epsilon}{n} S_K(U_{\epsilon}).$$
It follows from lemma  \ref{lemma:variationalmixed} that $S_K(U_{\epsilon})=0$.

Similarly, let $l_g(u)=\lim_{t\to 0^-}  \frac{(h_{W_t}-h_K)(u)}{t}$, $u\in \Sph$, so that $l_g(u)\geq f(u)$ for all $u \in \Omega$, and let $V_{\epsilon}=\{u\in \Sph : l_g(u) > f(u)+\epsilon\}$. Fix $x_0\in int(K)$, and let $K'=K-x_0$. If $|t|$ is small enough, then a translate of $(1-C|t|)K'$ is contained in $W_t$, with $C=(\max_u |f|)/(\min_u h_{K'})>0$.

In other words, $\frac{(h_{W_t}-h_K)(u)}{t} \leq C h_{K'}(u)$ for all $u\in \Sph$, for any $t<0$ (with $|t|$ small enough). Hence, by dominated convergence,
\begin{align*} \lim_{t\to 0^-} \frac{V_1-V_0}{t}&=\frac{1}{n} \int_{V_{\epsilon}} l_g(u) dS_K(u) +\frac{1}{n} \int_{\Omega\setminus V_{\epsilon}} l_g(u) dS_K(u)
\\
& \geq \frac{1}{n} \int_{\Omega} f(u) dS_K(u) + \frac{\epsilon}{n} S_K(V_{\epsilon}).
\end{align*}
So that $S_K(V_{\epsilon})=0$, by lemma  \ref{lemma:variationalmixed}.
It follows that $l_g(u)=f(u)$ for $S_K$-a.e. $u\in \Omega$.
\end{proof}

\subsection{Examples when Proposition \ref{proposition:main} applies : computations}

We review one of the examples listed after Proposition \ref{p_facets_image}.

Let $M$ be half a Euclidean ball : $M=B_2^n \cap H_+$, with $H_+=\{x\in \R^n : x_n \geq 0\}$. Then
$$\Isop(M)=\frac{|\partial M|}{n|M|}=\frac{2|\partial M|}{n \kappa_n}=\frac{n\kappa_n}{n\kappa_n} + \frac{2\kappa_{n-1}}{n\kappa_n}=1+\frac{1}{nW_n} >1.$$ where $W_n=\int_0^{\pi/2} (\cos(\phi))^n d\phi$ (recall that $\kappa_n=2\kappa_{n-1} W_n$).

Let $T=T_a$ be the linear map such that $Te_i=e_i$ when $i\leq n-1$, and $Te_n=ae_n$. Then $TB_2^n=\mathcal{E}_a$ is an ellipsoid of volume $a\kappa_n$. Moreover one may compute its surface area :
\begin{figure}
  \includegraphics[scale=0.23]{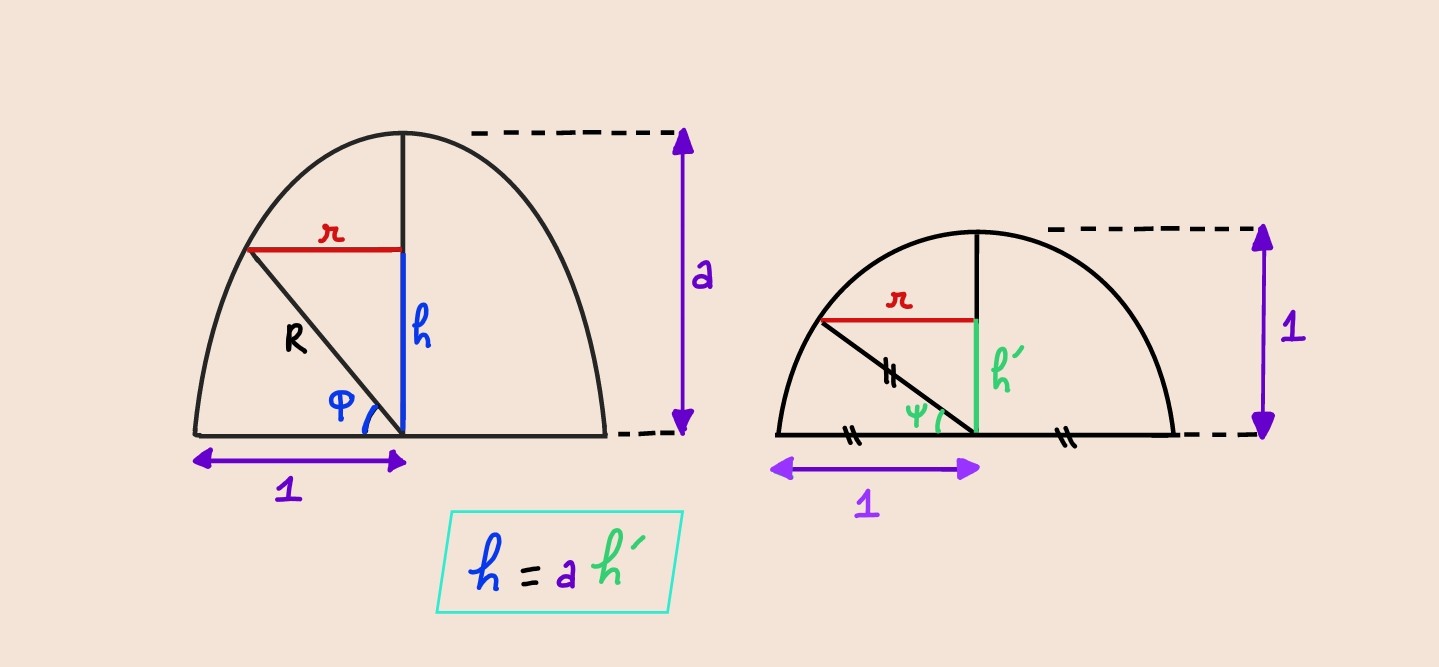} 
\end{figure}

$$|\partial \mathcal{E}_a|=2(n-1)\kappa_{n-1} \int_0^{\pi/2} R_{\phi}^{n-1} (\cos(\phi))^{n-2} d\phi=2(n-1)\kappa_{n-1} \int_0^{\pi/2} r_{\phi}^{n-1} \frac{d\phi}{\cos(\phi)}.$$

The equation of the surface $\partial \mathcal{E}_a$ is $r^2+\frac{h^2}{a^2}=1$, where $h=x_n$, and $r^2=x_1^2+...+x_{n-1}^2$.
The surface can be layered according to $r=R\cos(\phi) \in [0,1]$, and one may write $r$ as $r=:\cos(\psi)$ for some $\psi\in [0,\frac{\pi}{2}]$.
Then $\tan^2(\psi)=\frac{1-r^2}{r^2}=\frac{1}{a^2} \frac{h^2}{r^2}=\frac{1}{a^2} \tan^2(\phi)$, therefore (differentiating) $(1+\tan^2(\psi)) d\psi=\frac{1}{a}(1+\tan^2(\phi)) d\phi=\frac{1}{a}(1+ a^2\tan^2(\psi)) d\phi$, i.e. $d\phi=a \frac{1+\tan^2(\psi)}{1+a^2 \tan^2(\psi)} d\psi=a \frac{1}{1+(a^2-1)\sin^2(\psi)} d\psi$.

Also, $\cos(\phi)=\frac{r}{R}=\frac{r}{(a^2-(a^2-1)r^2)^{1/2}}=\frac{\cos(\psi)}{(1+(a^2-1)\sin^2(\psi))^{1/2}}$.

Changing the parametrization in $\phi$, into a parametrization in $\psi$, yields for the surface area :

$$|\partial \mathcal{E}_a|=2(n-1)\kappa_{n-1} \int_0^{\pi/2} (\cos(\psi))^{n-2} \frac{a}{(1+(a^2-1)\sin^2(\psi))^{1/2}} d\psi.$$

When $a=1$, one recovers $|\partial \mathcal{E}_1|=2(n-1)\kappa_{n-1} W_{n-2}=2\kappa_{n-1} nW_n=n\kappa_n$.
When $a>1$, one gets $|\partial \mathcal{E}_a|=\lambda (an\kappa_n)$, where $\lambda \in (a^{-1}, 1)$ is defined via :

$$\lambda :=\int_0^{\pi/2} (\cos(\psi))^{n-2} \frac{1}{(1+(a^2-1)\sin^2(\psi))^{1/2}} d\psi  \left(\int_0^{\pi/2} (\cos(\psi))^{n-2} d\psi\right)^{-1}.$$

Hence $TM$, the image of the half-ball $M$ under $T$, is a half-ellipsoid whose ``isoperimetric ratio'' is given by :

$$\Isop(TM)=\frac{|\partial \mathcal{E}_a|}{n |\mathcal{E}_a|}+\frac{2\kappa_{n-1}}{n|\mathcal{E}_a|}=\lambda +\frac{1}{a nW_n}.$$

The unique face $F$ of $TM$ satisfies $\Isop(F)=1$ : hence $TM$ satisfies the condition of Prop \ref{p_facets_image}, as soon as $a$ is such that :
$\lambda <1-\frac{1}{anW_n}$.

If $n=2$ and $a\geq 3$, then $1-\frac{1}{anW_n}=1-\frac{2}{a\pi}>\frac{3}{4}$, and one can easily check $\lambda <\frac{3}{4}$, so that $\Isop(TM)<1$ holds. Since $(\psi \mapsto (1+(a^2-1)\sin^2(\psi)))$ is increasing in $\psi \in [0,\pi/2]$, one may upper bound $\lambda=\lambda_a$ :
\begin{equation*}
\lambda=\frac{2}{\pi} \int_0^{\pi/2} \frac{1}{(1+(a^2-1)\sin^2(\psi))^{1/2}} d\psi< \frac{2}{\pi}  \left( \frac{\pi}{4}+ \frac{\pi}{4} \left( \frac{2}{a^2+1} \right)^{1/2}\right) < \frac{3}{4}
\hspace{3mm} (\text{if $a\geq 3$}).
\end{equation*}

If $n\geq 3$, then we use the estimate $\frac{\pi}{2(n+1)}<W_n^2<\frac{\pi}{2n}$, 
yielding \begin{align*}
nW_n> \sqrt{\frac{\pi n}{2}}\left(1+\frac{1}{n}\right)^{-1/2} > \sqrt{\frac{\pi n}{2}} \left(1-\frac{1}{2n}\right) \geq \frac{5}{6} \sqrt{\frac{\pi n}{2}} > \sqrt{n},\end{align*} and so $1-\frac{1}{anW_n}>1-\frac{1}{a\sqrt{n}}$.

 One may check that if $a\geq 4 n \sqrt{n}$, then $\lambda=\lambda_a < 1-\frac{1}{a\sqrt{n}}$.
  Indeed, letting $w_{\psi}:=\cos(\psi)^{n-2}$, similarly as in the $2$-dimensional case, one may upper bound $\lambda$ by splitting the integral :
  \begin{align*}\lambda W_{n-2}&=\int_0^{\pi/2n} (1+(a^2-1)\sin^2(\psi))^{-1/2} w_{\psi}d\psi+\int_{\pi/2n}^{\pi/2} (1+(a^2-1)\sin^2(\psi))^{-1/2} w_{\psi}d\psi 
  \\
  &< \frac{\pi}{2n}+ \frac{\pi}{2}   \frac{n}{(n^2+a^2-1)^{1/2}}\end{align*}
  using that $w_{\psi}=(\cos \psi)^{n-2}\leq 1$. Then one concludes, that for $a>n$ large enough :
  $$\lambda W_{n-2} <\left( 1-\frac{1}{a\sqrt{n}}\right) \sqrt{\frac{\pi}{2n}}<\left( 1-\frac{1}{a\sqrt{n}}\right) W_{n-2} $$
  so that $\Isop(TM)=\lambda+\frac{1}{anW_n}<\lambda+\frac{1}{a\sqrt{n}} <1=\Isop(F)$, as desired.

\end{document}